\definecolor{myred}{RGB}{255,50,50}         
\definecolor{myblack}{RGB}{0,0,0}           
\newtheorem{theorem}{Theorem}[section]
\newtheorem{lemma}[theorem]{Lemma}
\newtheorem{definition}[theorem]{Definition}
\newtheorem{proposition}[theorem]{Proposition}
\newtheorem{assumption}[theorem]{Assumption}
\numberwithin{equation}{section}
\newcommand{\grad}{\nabla}                       
\renewcommand{\implies}{\Rightarrow}             
\newcommand{\inner}[2]{\langle#1,#2\rangle}      
\newcommand{\lin}{\mathrm{lin}}                  
\newcommand{\norm}[1]{\|#1\|}                    
\renewcommand{\Re}{\mathbb{R}}                   
\renewcommand{\S}{\mathbb{S}}                    
\newcommand{\T}{\top\hspace{-1pt}}               
\newcommand{\tr}{\mathrm{tr}}                    
\newcommand{\PS}{P_{\S^m_+}}                     
\renewcommand{\L}{\mathcal{L}}                   
\newcommand{\diag}{\mathrm{diag}}  
\begin{document}


\title{Exact Augmented Lagrangian Functions for\\ Nonlinear
  Semidefinite Programming%
  \thanks{This is a pre-print of an article published in Computational
    Optimization and Applications. The final authenticated version is
    available online at:
    \protect\url{https://doi.org/10.1007/s10589-018-0017-z}.  This
    work was supported by the Grant-in-Aid for Young Scientists (B)
    (26730012) and for Scientific Research (B) (15H02968) from Japan
    Society for the Promotion of Science.}  }

\author{ Ellen H. Fukuda%
  \thanks{Department of Applied Mathematics and Physics, Graduate
    School of Informatics, Kyoto University, Kyoto \mbox{606--8501},
    Japan (\texttt{ellen@i.kyoto-u.ac.jp}).}  \and Bruno
  F. Louren\c{c}o%
  \thanks{Department of Mathematical Informatics, Graduate School of
    Information Science \& Technology, University of Tokyo,
    \mbox{Tokyo 113--8656}, Japan
    (\texttt{lourenco@mist.i.u-tokyo.ac.jp}).}
}

\date{June 20, 2018}

\maketitle


\begin{abstract}
  \noindent In this paper, we study augmented Lagrangian functions for
  nonlinear semidefinite programming (NSDP) problems with exactness
  properties. The term \emph{exact} is used in the sense that the
  penalty parameter can be taken appropriately, so a single
  minimization of the augmented Lagrangian recovers a solution of the
  original problem. This leads to \mbox{reformulations} of NSDP problems into
  unconstrained nonlinear programming ones. Here, we first establish a
  unified framework for constructing these exact functions,
  generalizing Di Pillo and Lucidi's work from 1996, that was aimed at
  solving nonlinear programming problems. Then, through our framework,
  we propose a practical augmented Lagrangian function for NSDP,
  proving that it is continuously differentiable and exact under the
  so-called nondegeneracy condition. We also present some
  preliminary numerical experiments.\\

  \noindent \textbf{Keywords:} Differentiable exact merit functions,
  generalized augmented Lagrangian functions, nonlinear semidefinite
  programming.
\end{abstract}


\section{Introduction}

The following \emph{nonlinear semidefinite programming} (NSDP)
problem is considered:
\begin{equation}
  \label{eq:nsdp}
  \tag{NSDP}
  \begin{array}{ll}
    \underset{x}{\mbox{minimize}} & f(x) \\ 
    \mbox{subject to} & G(x) \in \S_{+}^m,
  \end{array}
\end{equation}
where $f \colon \Re^n \to \Re$ and $G \colon \Re^n \to \S^m$ are twice
continuously differentiable functions, $\S^m$ is the linear space of
all real symmetric matrices of dimension $m \times m$, and $\S^m_+$ is
the cone of all positive semidefinite matrices in $\S^m$. For
simplicity, here we do not take equality constraints into
consideration. The above problem extends the well-known nonlinear
programming (NLP) and the linear semidefinite programming (linear SDP)
problems. NLP and linear SDP models are certainly important, but they
may be insufficient in applications where more general constraints are
necessary.  In particular, in the recent literature, some applications
of NSDP are considered in different fields, such as control
theory~\cite{ANT03,FAN01}, structural optimization~\cite{KT06,KS04},
truss design problems~\cite{BJKNZ00}, and finance~\cite{KKW03}.
However, compared to NLP and linear SDP models, there are still few
methods available to solve NSDP, and the theory behind them requires
more investigation.

Some theoretical issues associated to NSDP, like optimality
conditions, are discussed in~\cite{BS00,For00,Jar12,LFF16,Sha97}.
There are, in fact, some methods for NSDPs proposed in the literature
such as primal-dual interior-point, augmented Lagrangian,
filter-based, sequential quadratic programming, and exact penalty
methods. Nevertheless, there are few implementations and, as far as we
know, only two general-purpose solvers are able to handle nonlinear
semidefinite constraints: PENLAB/PENNON \cite{FKS13} and NuOpt
\cite{YYH12}.  For a complete survey, see Yamashita and
Yabe~\cite{YY15}, and references therein.

Here, our main object of interest is the so-called \emph{augmented
  Lagrangian functions} and this work can be seen as a stepping stone
towards new algorithms for NSDPs. An augmented Lagrangian function is
basically the usual Lagrangian function with an additional term that
depends on a positive coefficient, called the \emph{penalty
  parameter}. When there exists an appropriate choice of the
parameter, such that a single minimization of the augmented Lagrangian
recovers a solution to the original problem, then we say that this
function is \emph{exact}. This is actually the same definition of the
so-called \emph{exact penalty} function. The difference is that an
exact augmented Lagrangian function is defined on the product space of
the problem's variables and the Lagrange multipliers, and an exact
penalty function is defined on the same space of the original
problem's variables. Both exact functions, which are also called
\emph{exact merit} functions, have been studied quite extensively when
the original problem is an~NLP.

The first proposed exact merit functions were nondifferentiable, and
the basic idea was to incorporate terms into the objective function
that penalize constraint violations.  However, unconstrained
minimization of nondifferentiable functions demands special methods,
and so, continuously differentiable exact functions were considered
subsequently. For NLPs, both exact penalty and exact augmented
Lagrangian functions were studied. The first one has an advantage of
having to deal with less variables, but it tends to have a more
complicated formula, because the information of the Lagrange
multipliers is, in some sense, hidden in the formula.  In most exact
penalty functions, this is done by using a function that estimates the
value of the Lagrange multipliers associated to a
point~\cite{DPG89}. The evaluation of this estimate is, however,
computationally expensive.

To overcome such a drawback, exact augmented Lagrangian functions can
be considered, with a price of increasing the number of variables. The
choice between these two types of exact functions depends, of course,
on the optimization problem at hand. So, exact augmented Lagrangian
functions were proposed in~\cite{DPG79} and~\cite{DPG82}, by Di Pillo
and Grippo for NLP problems with equality and inequality constraints,
respectively. They were further investigated
in~\cite{Ber82a,DPLLP03,DPL01,Luc88}, with additional theoretical
issues and schemes for box-constrained NLP problems. However, as far
as we know, there are no proposals for exact augmented Lagrangian
functions for more general conic constrained problems, in particular,
for~NSDP. The augmented Lagrangian function considered by Correa
  and Ram\'irez~\cite{CR04}, and Shapiro and Sun~\cite{SS04}, for
  example, is not exact.

In this paper, we introduce a continuously differentiable exact
augmented Lagrangian function for NSDP problems. We also give a
unified framework for constructing such functions. More precisely, we
propose a generalized augmented Lagrangian function for NSDP, and give
conditions for it to be exact. The main difference between the
  classical (and not exact) augmented Lagrangian and this exact
  version is the addition of a term, that we define in
  Section~\ref{sec:framework} as~$\gamma$. This is a continuously
  differentiable function defined in the product space of problem’s
  variables and the Lagrange multipliers, with key properties that
  guarantee the exactness of the augmented Lagrangian function. A
general framework with such $\gamma$ term was also given by Di
Pillo and Lucidi in~\cite{DPL96} for the NLP case. Besides the
optimization problem, a difference between~\cite{DPL96} and our work
is that, here, we propose the generalization first, and then construct
one particular exact augmented Lagrangian function. We believe that
the generalized function can be used in the future to easily build
other exact merit functions, together with possibly useful
methods. Meanwhile, we make some preliminary numerical
  experiments with the particular exact function, using a quasi-Newton
  method.

The paper is organized as follows. In Section~\ref{sec:preliminaries},
we start with basic definitions and necessary results associated to
NSDP problems. In Section~\ref{sec:framework}, a general framework for
constructing augmented Lagrangian with exactness properties is
given. A practical exact augmented Lagrangian as well as its exactness
results are given in Section~\ref{sec:exact_auglag}. This
  particular function is used in Section~\ref{sec:experiments}, where
  some numerical examples are presented. We conclude in
Section~\ref{sec:conclusions}, with some final remarks.


\section{Preliminaries}
\label{sec:preliminaries}

Let us first present some basic notations that will be used throughout
the paper. Let $x \in \Re^r$ be a $r$-dimensional column vector and $Z
\in \S^s$ a symmetric matrix with dimension $s \times s$. We use $x_i$
and $Z_{ij}$ to denote the $i$th element of $x$ and $(i,j)$ entry
($i$th row and $j$th column) of $Z$, respectively. We also use the
notation $[x_i]_{i=1}^r$ and $[Z_{ij}]_{i,j=1}^s$ to denote $x$ and
$Z$, respectively. The trace of $Z$ is denoted by $\tr(Z) :=
\sum_{i=1}^s Z_{ii}$. Moreover, if $Y \in \S^s$, then the inner
product of $Y$ and $Z$ is written as $\inner{Y}{Z} := \tr (Y Z)$, and
the Frobenius norm of $Z$ is given by $\norm{Z}_F :=
\inner{Z}{Z}^{1/2}$. The identity matrix, with dimension defined in
each context, is denoted by~$I$, and $\PS$ denotes the projection onto
the cone~$\S^m_+$.

For a function $p \colon \Re^s \to \Re$, its gradient and Hessian at a
point $x \in \Re^s$ are given by $\grad p(x) \in \Re^s$ and $\grad^2
p(x) \in \Re^{s \times s}$, respectively. For $q \colon \S^{\ell} \to
\Re$, $\grad q(Z)$ denotes the matrix with $(i,j)$ term given by the
partial derivatives $\partial q(Z)/\partial Z_{ij}$. If $\psi \colon
\Re^s \times \S^{\ell} \to \Re$, then its gradient at $(x,Z) \in \Re^s
\times \S^{\ell}$ with respect to $x$ and {$Z$} are denoted by
$\grad_x \psi(x,{Z}) \in \Re^s$ and $\grad_{{Z}}
\psi(x,{Z}) \in \S^{\ell}$, respectively. Similarly, the Hessian
of $\psi$ at $(x,Z)$ with respect to $x$ is written as $\grad_{xx}^2
\psi(x,{Z})$. For any linear operator $\mathcal{G} \colon \Re^s \to
\S^\ell$ defined by $\mathcal{G}v = \sum_{i=1}^s v_i \mathcal{G}_i$
with $\mathcal{G}_i \in \S^\ell$, $i=1,\dots,s$, and $v \in \Re^s$,
the adjoint operator $\mathcal{G}^*$ is defined by
\begin{displaymath}
  \mathcal{G}^* Z = (\inner{\mathcal{G}_1}{Z}, \dots,
  \inner{\mathcal{G}_s}{Z})^\T, \quad Z \in \S^\ell.
\end{displaymath}
Given a mapping $\mathcal{G} \colon \Re^s \to \S^\ell$, its derivative
at a point $x \in \Re^s$ is denoted by $\grad \mathcal{G}(x) \colon
\Re^s \to \S^\ell$ and defined by
\begin{displaymath}
  \grad \mathcal{G}(x) v = \sum_{i=1}^s v_i \frac{\partial
    \mathcal{G}(x)}{\partial x_i}, \quad v \in \Re^s,
\end{displaymath}
where $\partial \mathcal{G}(x)/\partial x_i \in \S^\ell$ are the
partial derivative matrices.

One important operator that is necessary when dealing with NSDP
problems is the \emph{Jordan product} associated to the
space~$\S^m$. For any $Y, Z \in \S^m$, it is defined by
\begin{displaymath}
  Y \circ Z := \frac{YZ + ZY}{2}.
\end{displaymath}
Taking $Y \in \S^m$, we also denote by $\L_Y \colon \S^m \to
\S^m$ the linear operator given by
\begin{displaymath}
  \L_Y(Z) : = Y \circ Z.
\end{displaymath}
Since we are only considering the space $\S^m$ of symmetric matrices,
we have $\L_Y(Z) = \L_Z(Y)$. In the following lemmas, we present some
useful results associated to this Jordan product and the projection
operator~$\PS$.

\begin{lemma}
  \label{lem:properties1}
  For any matrix $Z \in \Re^{m \times m}$, the following statements
  hold:
  \begin{itemize}
  \item[(a)] $\PS(-Z) = \PS(Z) - Z$;
  \item[(b)] $\PS(Z) \circ \PS(-Z) = 0$.
  \end{itemize}
\end{lemma}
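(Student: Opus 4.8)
The plan is to reduce both identities to scalar, eigenvalue-wise computations by exploiting that $Z$ and $-Z$ are diagonalized by the same orthogonal matrix. Since $\PS$ is the Euclidean (Frobenius) projection onto the cone $\S^m_+$, which lives in the space $\S^m$ of symmetric matrices, I would work with the spectral decomposition $Z = U \Lambda U^\T$, where $U$ is orthogonal and $\Lambda = \diag(\lambda_1,\dots,\lambda_m)$, and recall the standard characterization of the projection onto the positive semidefinite cone, namely $\PS(Z) = U \Lambda_+ U^\T$ with $\Lambda_+ = \diag(\max\{\lambda_1,0\},\dots,\max\{\lambda_m,0\})$. Writing $-Z = U(-\Lambda)U^\T$ and applying the same characterization yields $\PS(-Z) = U \Lambda_- U^\T$, where $\Lambda_- = \diag(\max\{-\lambda_1,0\},\dots,\max\{-\lambda_m,0\})$.

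For part (a), I would invoke the elementary scalar identity $\max\{t,0\} - t = \max\{-t,0\}$, valid for every $t \in \Re$. Applied entrywise to the diagonal it gives $\Lambda_+ - \Lambda = \Lambda_-$, and hence
\begin{displaymath}
  \PS(Z) - Z = U(\Lambda_+ - \Lambda)U^\T = U \Lambda_- U^\T = \PS(-Z),
\end{displaymath}
which is exactly (a).

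For part (b), I would compute the ordinary matrix product $\PS(Z)\PS(-Z) = U \Lambda_+ \Lambda_- U^\T$, using $U^\T U = I$. The diagonal matrix $\Lambda_+ \Lambda_-$ has $i$th entry $\max\{\lambda_i,0\}\,\max\{-\lambda_i,0\}$, and for each index at least one of the two factors vanishes according to the sign of $\lambda_i$, so $\Lambda_+ \Lambda_- = 0$ and therefore $\PS(Z)\PS(-Z) = 0$; the symmetric computation gives $\PS(-Z)\PS(Z) = 0$. Consequently the Jordan product $\PS(Z)\circ\PS(-Z) = \tfrac12\bigl(\PS(Z)\PS(-Z) + \PS(-Z)\PS(Z)\bigr)$ vanishes, proving (b).

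The step I expect to require the most care is the characterization $\PS(Z) = U\Lambda_+ U^\T$, since everything else is routine eigenvalue bookkeeping. If I did not wish to cite it, I would justify it through the variational definition of the projection: the matrix $U\Lambda_+ U^\T$ is positive semidefinite, the residual $Z - U\Lambda_+ U^\T = -U\Lambda_- U^\T$ is negative semidefinite (hence lies in $-\S^m_+$), and these two pieces are Frobenius-orthogonal; these are precisely Moreau's optimality conditions characterizing the projection onto the self-dual cone $\S^m_+$. Equivalently, one may read (a) as the orthogonal decomposition $Z = \PS(Z) - \PS(-Z)$ and recover (b) from the orthogonality of its two positive semidefinite summands.
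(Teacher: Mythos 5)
Your argument is correct, and it differs from the paper in that the paper offers no proof at all: it simply cites Section~1 of Tseng's paper \cite{Tse98} for both identities. Your spectral-decomposition route --- writing $Z = U\Lambda U^\T$, observing that $\PS(Z) = U\Lambda_+U^\T$ and $\PS(-Z) = U\Lambda_-U^\T$ share the same eigenbasis, and reducing (a) to the scalar identity $\max\{t,0\}-t=\max\{-t,0\}$ and (b) to $\Lambda_+\Lambda_-=0$ --- is the standard self-contained verification, and your fallback justification of $\PS(Z)=U\Lambda_+U^\T$ via the Moreau conditions (feasibility of $U\Lambda_+U^\T$, membership of the residual in $-\S^m_+$, and Frobenius-orthogonality of the two pieces) is exactly the right way to avoid an external citation. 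One small point worth flagging: the lemma is stated for arbitrary $Z\in\Re^{m\times m}$, but your proof (and indeed the statement itself) requires $Z$ symmetric --- for a nonzero skew-symmetric $Z$ one has $\PS(Z)=\PS(-Z)=0$ while $\PS(Z)-Z=-Z\neq 0$, so (a) fails; since the paper only ever applies the lemma to symmetric arguments, your implicit restriction to $Z\in\S^m$ is the correct reading, but it deserves a sentence of acknowledgement.
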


\begin{proof}
  See \cite[Section~1]{Tse98}.
\end{proof}

\begin{lemma}
  \label{lem:properties2}
  If $Y, Z \in \S^m$, then the following statements are equivalent:
  \begin{itemize}
  \item[(a)] $Y, Z \in \S^m_+$ and $Y \circ Z = 0$; 
  \item[(b)] $Y, Z \in \S^m_+$ and $\inner{Y}{Z} = 0$; 
  \item[(c)] $Y - \PS(Y-Z) = 0$.
  \end{itemize}
\end{lemma}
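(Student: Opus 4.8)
The plan is to establish the three conditions as a cycle of implications $(a)\Rightarrow(b)\Rightarrow(c)\Rightarrow(a)$. The advantage of this ordering is that it avoids arguing the nontrivial implication $(b)\Rightarrow(a)$ directly (which would normally require a spectral decomposition of the positive semidefinite factors to pass from orthogonality of traces to annihilation of the product); instead, that implication is recovered for free once the cycle is closed.

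The first step, $(a)\Rightarrow(b)$, is immediate: since $\inner{Y}{Z}=\tr(YZ)$ and $\tr(YZ)=\tr(ZY)$, we have $\inner{Y}{Z}=\tr(Y\circ Z)$, so $Y\circ Z=0$ forces $\inner{Y}{Z}=0$. The core of the argument is the second step, $(b)\Rightarrow(c)$. Here I would invoke the variational characterization of the Frobenius projection onto a closed convex cone: writing $W:=Y-Z$, the matrix $P=\PS(W)$ is characterized by $P\in\S^m_+$, $W-P$ lying in the polar cone, and $\inner{P}{W-P}=0$. Since $\S^m_+$ is self-dual, its polar cone is $-\S^m_+$, so it suffices to verify that the candidate $P=Y$ satisfies (i) $Y\in\S^m_+$, (ii) $Y-W=Z\in\S^m_+$ (so that $W-P=-Z\in-\S^m_+$), and (iii) $\inner{Y}{W-Y}=\inner{Y}{-Z}=0$. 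Conditions (i) and (ii) are exactly the semidefiniteness hypotheses in $(b)$, and (iii) is precisely $\inner{Y}{Z}=0$; hence $\PS(Y-Z)=Y$, which is $(c)$.

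The last step, $(c)\Rightarrow(a)$, is where Lemma~\ref{lem:properties1} does the work. Assuming $Y=\PS(Y-Z)$, membership $Y\in\S^m_+$ is automatic since a projection onto the cone lands in the cone. Applying Lemma~\ref{lem:properties1}(a) to the matrix $Y-Z$ gives $\PS(Z-Y)=\PS(Y-Z)-(Y-Z)=Y-(Y-Z)=Z$, so $Z\in\S^m_+$ as well; and Lemma~\ref{lem:properties1}(b) applied to $Y-Z$ yields $\PS(Y-Z)\circ\PS(Z-Y)=0$, i.e.\ $Y\circ Z=0$. This closes the cycle and establishes all equivalences. The only genuinely delicate point is the projection characterization used in $(b)\Rightarrow(c)$: one must correctly identify the polar of $\S^m_+$ and check the orthogonality condition, observing that ``$Y,Z\in\S^m_+$ together with $\inner{Y}{Z}=0$'' is exactly what the three defining conditions of the projection demand, so that no appeal to eigenvalues is needed once the projection is phrased variationally.
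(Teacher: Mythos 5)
Your proof is correct, and it is genuinely different from what the paper does: the paper offers no argument at all for this lemma, simply citing \cite[Section~8.12]{Ber09} and \cite[Lemma~2.1(b)]{Tse98}, whereas you give a self-contained derivation. Each step checks out. The cyclic ordering $(a)\Rightarrow(b)\Rightarrow(c)\Rightarrow(a)$ is a nice device: $(a)\Rightarrow(b)$ is just $\inner{Y}{Z}=\tr(Y\circ Z)$; the Moreau-type characterization of $\PS(Y-Z)$ (candidate in the cone, residual in the polar cone $-\S^m_+$ by self-duality, orthogonality) is exactly matched by the three conditions in $(b)$, giving $(c)$; and $(c)\Rightarrow(a)$ falls out of Lemma~\ref{lem:properties1}(a),(b) applied to $W=Y-Z$, which yields $Z=\PS(Z-Y)\in\S^m_+$ and $Y\circ Z=\PS(W)\circ\PS(-W)=0$. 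What your route buys is precisely what you say: the only implication that would normally require a spectral argument, $(b)\Rightarrow(a)$ (orthogonality of traces forcing $YZ=0$ for two PSD matrices), is obtained for free by closing the cycle, and the whole proof rests only on Moreau's decomposition and the paper's own Lemma~\ref{lem:properties1}. The one external ingredient you do use, the variational characterization of the projection onto a self-dual closed convex cone, is standard and correctly applied, so there is no gap.
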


\begin{proof}
  It follows from \cite[Section~8.12]{Ber09} and
  \cite[Lemma~2.1(b)]{Tse98}.
\end{proof}

\begin{lemma}
  \label{lem:derivatives}
  The following statements hold.
  \begin{itemize}
  \item[(a)] Let $Q \colon \Re^n \to \S^m$ be a differentiable
    function, and define $\psi \colon \Re^n \to \Re$ as $\psi(x) :=
    \norm{\PS(Q(x))}^2_F$. Then, the gradient of $\psi$ at $x \in
    \Re^n$ is given by
    \begin{displaymath}
      \grad \psi(x) = 2 \grad Q(x)^* \PS (Q(x)).
    \end{displaymath}
    A similar result holds when the domain of the functions~$Q$ and
    $\psi$ is changed to~$\S^m$.
  \item[(b)] Let $R_1,R_2 \colon \Re^n \to \S^m$ be differentiable
    functions, and define $P \colon \Re^n \to \S^m$ as $P(x) :=
    \L_{R_1(x)}(R_2(x)) = R1(x) \circ R_2(x)$. Then, we have
    \begin{displaymath}
      \grad P(x)^* Z = \displaystyle{\left[ \left\langle
            \frac{\partial R_1(x)}{\partial x_i} \circ R_2(x) + R_1(x) \circ
            \frac{\partial R_2(x)}{\partial x_i}, Z \right\rangle
        \right]_{i=1}^n} \:\: \mbox{for all } Z \in \S^m.
    \end{displaymath}
  \item[(c)] Let $\xi \colon \Re^n \to \Re$ be a differentiable
    function, and define $S \colon \Re^n \to \S^m$ as $S(x) :=
    \xi(x)W$, with $W \in \S^m$. Then, we obtain
    \begin{displaymath}
      \grad S(x)^* Z = \inner{W}{Z} \grad \xi(x) \quad \mbox{for all } 
      Z \in \S^m.
    \end{displaymath}
  \item[(d)] Let $\eta \colon \S^m \to \Re$ be a differentiable
    function, and define $T \colon \S^m \to \S^m$ as $T(Y) := \eta(Y)
    Y$. Then, we have
    \begin{displaymath}
      \grad T(Y)^*Z = \inner{Y}{Z} \grad \eta(Y) + \eta(Y) Z \quad
      \mbox{for all } Z \in \S^m.
    \end{displaymath}
  \end{itemize}
\end{lemma}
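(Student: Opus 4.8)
The plan is to handle all four parts with a single recipe: compute the directional derivative of the given matrix-valued map in an arbitrary direction, pair the result against a fixed $Z \in \S^m$, and then read off $\grad(\cdot)^* Z$ from the defining adjoint identity $\inner{\grad F(x)v}{Z} = \inner{v}{\grad F(x)^* Z}$. Parts (b), (c) and (d) require nothing beyond this recipe combined with the ordinary product rule, so I would dispose of them first.

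For (b), differentiating $P(x) = \L_{R_1(x)}(R_2(x)) = R_1(x) \circ R_2(x)$ by the bilinearity of the Jordan product gives $\grad P(x)v = (\grad R_1(x)v)\circ R_2(x) + R_1(x)\circ(\grad R_2(x)v)$. Substituting $\grad R_j(x)v = \sum_{i=1}^n v_i\,\partial R_j(x)/\partial x_i$ and pairing with $Z$ isolates the $i$th coordinate of $\grad P(x)^* Z$ as the stated bracketed expression. Parts (c) and (d) are the same computation for a scalar-times-matrix product: one has $\grad S(x)v = (\grad \xi(x)^\T v)\,W$ and $\grad T(Y)V = \inner{\grad \eta(Y)}{V}\,Y + \eta(Y)V$ (for (d) the domain is $\S^m$, so the variation $V$ ranges over $\S^m$), and pairing each against $Z$ and regrouping yields $\grad S(x)^* Z = \inner{W}{Z}\grad \xi(x)$ and $\grad T(Y)^* Z = \inner{Y}{Z}\grad \eta(Y) + \eta(Y)Z$ respectively.

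The only genuinely nontrivial part is (a), and the main obstacle is differentiating $Z \mapsto \norm{\PS(Z)}_F^2$; once this is settled, the chain rule delivers $\grad \psi(x) = 2\,\grad Q(x)^*\PS(Q(x))$. To compute the inner gradient I would invoke Lemma~\ref{lem:properties1}: part~(a) rewritten gives the decomposition $Z = \PS(Z) - \PS(-Z)$, so that $Z - \PS(Z) = -\PS(-Z)$ and hence $\mathrm{dist}(Z,\S^m_+) = \norm{\PS(-Z)}_F$, while part~(b) together with Lemma~\ref{lem:properties2} yields the orthogonality $\inner{\PS(Z)}{Z-\PS(Z)} = 0$. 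Pythagoras then gives $\norm{\PS(Z)}_F^2 = \norm{Z}_F^2 - \mathrm{dist}(Z,\S^m_+)^2$. Since the squared distance to a closed convex set is continuously differentiable with gradient $Z - \PS(Z)$, it follows that $\grad_Z\big(\tfrac12\norm{\PS(Z)}_F^2\big) = Z - (Z - \PS(Z)) = \PS(Z)$; in other words, the projection onto the convex cone is itself the gradient of half the squared norm of that projection. Applying the chain rule to $\psi = \norm{\PS(Q(\cdot))}_F^2$ then produces the claimed formula, and the argument transfers verbatim when the domain is $\S^m$. I expect the differentiability of $\norm{\PS(\cdot)}_F^2$—specifically the justification via the squared-distance (Moreau envelope) function—to be the only step needing real care, with everything else reducing to bookkeeping with the product rule and the adjoint identity.
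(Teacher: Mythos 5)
Your proposal is correct, and for parts (b)--(d) it coincides with what the paper does: the paper proves (d) by exactly your recipe (directional derivative, then the adjoint identity $\inner{W}{\grad T(Y)^*Z} = \inner{\grad T(Y)(W)}{Z}$), and dismisses (b) and (c) as immediate from the definitions of the adjoint and the Jordan product. The genuine divergence is in part (a): the paper simply cites Corollary~3.2 of Lewis's work on derivatives of spectral functions, whereas you give a self-contained derivation from the Moreau decomposition. Your chain --- $Z = \PS(Z) - \PS(-Z)$ from Lemma~\ref{lem:properties1}(a), orthogonality of $\PS(Z)$ and $\PS(-Z)$ from Lemma~\ref{lem:properties1}(b) together with Lemma~\ref{lem:properties2}, Pythagoras to get $\norm{\PS(Z)}_F^2 = \norm{Z}_F^2 - \mathrm{dist}(Z,\S^m_+)^2$, and then the $C^1$ smoothness of the half-squared distance with gradient $Z - \PS(Z)$ --- is sound and yields $\grad_Z\bigl(\norm{\PS(Z)}_F^2\bigr) = 2\PS(Z)$, after which the chain rule gives the stated formula. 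What your route buys is independence from spectral-function machinery and a proof that visibly reuses the paper's own Lemmas~\ref{lem:properties1} and~\ref{lem:properties2}; what it costs is that the burden of proof is shifted onto the standard convex-analysis fact that the squared distance to a closed convex set (equivalently, the Moreau envelope of its indicator) is continuously differentiable with the projection residual as gradient --- you correctly flag this as the one step needing care, and it does still require a citation or a short argument of its own, so the overall logical depth is comparable to the paper's one-line citation.
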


\begin{proof}
  Item~(a) follows from \cite[Corollary~3.2]{Lew96} and item~(b)
  follows easily from the definitions of adjoint operator and Jordan
  product. Item~(c) holds also from the definition of adjoint
  operator, and because $\partial S(x) / \partial x_i = (\partial
  \xi(x) / \partial x_i) W$ for all~$i$. For item~(d), observe that
  for all $W \in \S^m$, we obtain
  \begin{eqnarray*}
    \grad T(Y)(W) & = & \lim_{t \downarrow 0} \frac{T(Y+tW) - T(Y)}{t}\\
    & = & \lim_{t \downarrow 0} \frac{\eta(Y+tW) (Y+tW) - \eta(Y)Y}{t}\\
    & = & \lim_{t \downarrow 0} \frac{\big( \eta(Y+tW) - \eta(Y) \big)Y}{t} 
    + \eta(Y+tW)W \\
    & = & \eta'(Y;W)Y + \eta(Y)W,
  \end{eqnarray*}
  where $\eta'(Y;W)$ is the directional derivative of~$\eta$ at~$Y$ in
  the direction~$W$. From the differentiability of~$\eta$, we have
  $\grad T(Y)(W) = \inner{\grad \eta(Y)}{W} Y + \eta(Y) W$. Recalling that $\grad T(Y)^*$ denotes the adjoint of $\grad T(Y)$, this
  equality yields
  \begin{displaymath}
    \inner{W}{\grad T(Y)^* Z} = \inner{\grad T(Y)(W)}{Z} =
    \inner{\grad \eta(Y)}{W} \inner{Y}{Z} + \eta(Y) \inner{W}{Z}
  \end{displaymath}
  for all $W,Z \in \S^m$, which completes the proof.
\end{proof}

Let us return to problem~\eqref{eq:nsdp}. Define $L \colon \Re^n
\times \S^m \to \Re$ as the Lagrangian function
associated to problem~\eqref{eq:nsdp}, that~is,
\begin{displaymath}
  L(x,\Lambda) := f(x) - \inner{G(x)}{\Lambda}.
\end{displaymath}
The pair $(x,\Lambda) \in \Re^n \times \S^m$ satisfies the \emph{KKT
  conditions} of problem~\eqref{eq:nsdp} (or, it is a KKT pair) if the
following conditions hold:
\begin{equation}
  \label{eq:nsdp_kkt}
  \begin{array}{rcl}
    \grad_x L(x,\Lambda) & = & 0, \\
    \Lambda \circ G(x) & = & 0, \\
    G(x) & \in & \S_{+}^m, \\ 
    \Lambda & \in & \S_{+}^m, \\ 
   \end{array}
\end{equation}
where
\begin{displaymath}
  \grad_x L(x,\Lambda) = \grad f(x) - \grad G(x)^* \Lambda.
\end{displaymath}
The above conditions are necessary for optimality under a constraint
qualification. Moreover, Lemma~\ref{lem:properties2} shows that the
condition $\Lambda \circ G(x) = 0$ can be replaced by
$\inner{\Lambda}{G(x)} = 0$ because $G(x) \in \S_{+}^m$ and $\Lambda
\in \S_{+}^m$ hold. Furthermore, it can be shown that this condition
can also be replaced by $\Lambda G(x) = 0$~\cite[Section~2]{YY15}.

Now, consider the nonlinear programming below:
\begin{equation}
  \label{eq:unc_nlp}
  \begin{array}{ll}
    \underset{x,\Lambda}{\mbox{minimize}} & \Psi_c(x,\Lambda) \\ 
    \mbox{subject to} & (x,\Lambda) \in \Re^n \times \S^m,
  \end{array}
\end{equation}
where $\Psi_c \colon \Re^n \times \S^m \to \Re$, and $c > 0$ is a
penalty parameter. Observe that the above problem is unconstrained,
with both the original variable~$x$ and the Lagrange
multiplier~$\Lambda$ as variables. As usual, we say that~$(x,\Lambda)$
is \emph{stationary} of~$\Psi_c$ (or for problem~\eqref{eq:unc_nlp})
when $\grad \Psi_c(x,\Lambda) = 0$. We use~$G_{\mbox{\tiny{NLP}}}(c)$
and~$L_{\mbox{\tiny{NLP}}}(c)$ to denote the sets of global and local
minimizers, respectively, of problem~\eqref{eq:unc_nlp}. We also
define~$G_{\mbox{\tiny{NSDP}}}$ and~$L_{\mbox{\tiny{NSDP}}}$ as the
set of global and local minimizers of problem~\eqref{eq:nsdp},
respectively. Using such notations, we present the formal definition
of exact augmented Lagrangian functions.

\begin{definition}
  \label{def:exactaug}
  A function $\Psi_c \colon \Re^n \times \S^m \to \Re$ is
  called an \emph{exact augmented Lagrangian} function associated
  to~\eqref{eq:nsdp} if, and only if, there exists $\hat{c} > 0$
  satisfying the following:
  \begin{itemize}
  \item[(a)] For all $c \ge \hat{c}$, if $(x,\Lambda) \in
    G_{\mbox{\tiny{NLP}}}(c)$, then $x \in
    G_{\mbox{\tiny{NSDP}}}$ and $\Lambda$ is a corresponding
    Lagrange multiplier. Conversely, if $x \in
    G_{\mbox{\tiny{NSDP}}}$ with $\Lambda$ as a corresponding
    Lagrange multiplier, then $(x,\Lambda) \in
    G_{\mbox{\tiny{NLP}}}(c)$ for all $c \ge \hat{c}$.
  \item[(b)] For all $c \ge \hat{c}$, if $(x,\Lambda) \in
    L_{\mbox{\tiny{NLP}}}(c)$, then $x \in
    L_{\mbox{\tiny{NSDP}}}$ and $\Lambda$ is a corresponding
    Lagrange multiplier.
  \end{itemize}
\end{definition}

Basically, the above definition shows that $\Psi_c$ is an exact
augmented Lagrangian function when, without considering Lagrange
multipliers, there are equivalence between the global minimizers, and
if all local solutions of~\eqref{eq:unc_nlp} are local solutions
of~\eqref{eq:nsdp}, for penalty parameters greater than a threshold
value. It means that the original constrained conic
problem~\eqref{eq:nsdp} can be replaced with an unconstrained nonlinear
programming problem~\eqref{eq:unc_nlp} when the penalty parameter is
chosen appropriately. Note that the definition of exact penalty
functions is similar. The only difference is that in the exact penalty
case, the objective function of problem~\eqref{eq:unc_nlp} does not
involve Lagrange multipliers explicitly.


\section{A general framework}
\label{sec:framework}

In this section, we propose a general formula for continuously
differentiable augmented Lagrangian functions associated to NSDP
problems, with exactness properties. It can be seen as a
generalization of the one proposed by Di Pillo and Lucidi
in~\cite{DPL96} for NLP problems. With this purpose, let us first
define the following function $\varphi \colon \S^m \times \S^m \to
\Re$:
\begin{equation}
  \label{eq:phi}
  \varphi(Y,Z) := \left\| \PS \left( \frac{Z}{2} - Y \right) \right\|_F^2
  - \frac{\norm{Z}^2_F}{4}.
\end{equation}
Observe that this function is continuously differentiable because
$\norm{\cdot}^2_F$ and $\norm{\PS(\cdot)}^2_F$ are both continuously
differentiable. Moreover, it has the properties below.

\begin{lemma}
  \label{lem:Phi}
  Let $\varphi \colon \S^m \times \S^m \to \Re$ be defined
  by~\eqref{eq:phi}. Then, the following statements hold.
  \begin{itemize}
  \item[(a)] If $Y,Z \in \S^m_+$ and $\inner{Y}{Z} = 0$, then
    $\varphi(Y,Z) = 0$.
  \item[(b)] If $Y \in \S^m_+$, then $\varphi(Y,Z) \le 0$ for all $Z \in
    \S^m$.
  \end{itemize}
\end{lemma}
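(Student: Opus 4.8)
The plan is to handle the two items separately, in each case reducing the statement to the already-established Lemmas~\ref{lem:properties1} and~\ref{lem:properties2} together with the variational properties of the projection $\PS$.

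For part~(a) the key observation is that, under the hypotheses, the matrix inside the projection projects \emph{exactly} onto $\frac{Z}{2}$. To obtain this I would apply Lemma~\ref{lem:properties2} with its two matrices rescaled and swapped: setting $\tilde{Y} := \frac{Z}{2}$ and $\tilde{Z} := Y$, both lie in $\S^m_+$ and satisfy $\inner{\tilde{Y}}{\tilde{Z}} = \frac{1}{2}\inner{Z}{Y} = 0$, so the equivalence $(b)\Leftrightarrow(c)$ of that lemma yields $\tilde{Y} - \PS(\tilde{Y} - \tilde{Z}) = 0$, i.e.\ $\PS\!\big(\frac{Z}{2} - Y\big) = \frac{Z}{2}$. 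Substituting into~\eqref{eq:phi} then gives $\varphi(Y,Z) = \norm{\frac{Z}{2}}_F^2 - \frac{\norm{Z}_F^2}{4} = 0$. This part is essentially a one-line reduction to Lemma~\ref{lem:properties2}, so I expect no real difficulty here.

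For part~(b) there is no exact identity and the bound must come from the obtuse-angle property of a cone projection. Writing $v := \frac{Z}{2} - Y$, I would first establish $\norm{\PS(v)}_F^2 = \inner{\PS(v)}{v}$: by Lemma~\ref{lem:properties1}(a) we have the decomposition $v = \PS(v) - \PS(-v)$, while Lemma~\ref{lem:properties1}(b) gives $\PS(v) \circ \PS(-v) = 0$, hence $\inner{\PS(v)}{\PS(-v)} = \tr\big(\PS(v)\PS(-v)\big) = 0$, so the cross term vanishes. Next I would expand $\inner{\PS(v)}{v} = \inner{\PS(v)}{\frac{Z}{2}} - \inner{\PS(v)}{Y}$ and discard the last term, which is nonnegative because $\PS(v)$ and $Y$ are both positive semidefinite. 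A Cauchy--Schwarz estimate then bounds $\inner{\PS(v)}{\frac{Z}{2}}$ by $\norm{\PS(v)}_F \cdot \frac{\norm{Z}_F}{2}$, so that $\norm{\PS(v)}_F^2 \le \norm{\PS(v)}_F \cdot \frac{\norm{Z}_F}{2}$; dividing by $\norm{\PS(v)}_F$ (the case $\PS(v)=0$ being immediate) yields $\norm{\PS(v)}_F \le \frac{\norm{Z}_F}{2}$, which is exactly $\varphi(Y,Z) \le 0$.

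The main obstacle, such as it is, is recognizing the right economy of hypotheses: the positive semidefiniteness of $Y$ should enter only through the single inequality $\inner{\PS(v)}{Y} \ge 0$, with everything else being a generic cone-projection estimate. The tempting alternative is to diagonalize $Y$ and $Z$ simultaneously using the orthogonal-complementarity structure; that route does work but is heavier than necessary. I would also double-check that $Z \in \S^m_+$ is \emph{not} required for part~(b)---only $Y \in \S^m_+$ is used---which is consistent with the statement as written.
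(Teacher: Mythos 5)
Your proof is correct. Part~(a) is essentially the paper's own argument: the authors likewise note $Z/2 \in \S^m_+$ and invoke Lemma~\ref{lem:properties2} to get $\PS(Z/2 - Y) = Z/2$, then square the norm; your explicit relabeling $\tilde Y = Z/2$, $\tilde Z = Y$ just makes the application of the equivalence $(b)\Leftrightarrow(c)$ more pedantic.

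For part~(b) you take a genuinely different route. The paper's proof is a two-line appeal to the nonexpansiveness of the projection onto the convex set $\S^m_+$: since $Y \in \S^m_+$ gives $\PS(-Y) = 0$, one has $\norm{\PS(Z/2 - Y)}_F = \norm{\PS(Z/2 - Y) - \PS(-Y)}_F \le \norm{(Z/2 - Y) - (-Y)}_F = \norm{Z/2}_F$, and squaring finishes. You instead derive the Moreau-type identity $\norm{\PS(v)}_F^2 = \inner{\PS(v)}{v}$ from Lemma~\ref{lem:properties1}, drop the term $\inner{\PS(v)}{Y} \ge 0$ using the self-duality of $\S^m_+$, and close with Cauchy--Schwarz. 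Both arguments are sound and both isolate the role of $Y \in \S^m_+$ correctly (your observation that $Z$ need not be positive semidefinite in part~(b) matches the statement and the paper's proof). The paper's version is shorter and leans on a single well-known property of projections onto closed convex sets; yours is slightly longer but stays closer to the two lemmas the paper has already recorded, at the cost of importing one extra elementary fact, namely that the trace inner product of two positive semidefinite matrices is nonnegative. Neither approach is preferable on substance; the trade-off is purely which external facts one is willing to cite.
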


\begin{proof}
  (a) Clearly, $Z/2 \in \S^m_+$ because $\S^m_+$ is a cone. From
  Lemma~\ref{lem:properties2}, we have $Z/2 = \PS (Z/2 - Y)$. Thus,
  taking the square of the Frobenius norm in both sides of this
  expression gives the result.\\

  \noindent (b) Since $Y \in \S^m_+$, we obtain $\PS(-Y) = 0$. Using
  this fact and the nonexpansive property of the projection, we
  get
  \begin{displaymath}
    \left\| \PS \left( \frac{Z}{2} - Y \right) \right\|_F = 
    \left\| \PS \left( \frac{Z}{2} - Y \right) - 
      \PS \left( -Y \right) \right\|_F
    \le \left\| \frac{Z}{2} \right\|_F.
  \end{displaymath}
  Thus, the result follows by squaring both sides of the above inequality.
\end{proof}

We propose a \emph{generalized augmented Lagrangian function}
$\mathcal{A}_c \colon \Re^n \times \S^m \to \Re$ as follows:
\begin{equation}
  \label{eq:A_c}
  \mathcal{A}_c(x,\Lambda) := f(x) + \alpha_c(x,\Lambda) 
  \varphi \big( G(x),\beta_c(x,\Lambda)\Lambda \big) + \gamma(x,\Lambda),
\end{equation}
where $c > 0$ is a penalty parameter, $\alpha_c, \beta_c, \gamma
\colon \Re^n \times \S^m \to \Re$, and $\varphi$ is given
in~\eqref{eq:phi}, namely
\begin{displaymath}
  \varphi \big( G(x),\beta_c(x,\Lambda)\Lambda \big) = \left\| \PS
  \left( \frac{\beta_c(x,\Lambda)}{2} \Lambda - G(x) \right)
  \right\|_F^2 - \frac{\beta_c(x,\Lambda)^2}{4} \norm{\Lambda}^2_F.
\end{displaymath} 
We will show now that~$\mathcal{A}_c$ is an exact augmented Lagrangian
function associated to~\eqref{eq:nsdp} in the sense of
Definition~\ref{def:exactaug}, when certain assumptions for
$\alpha_c$, $\beta_c$, and $\gamma$ are satisfied.

\begin{assumption}
  \label{hip:general_func}
  The functions $\alpha_c, \beta_c, \gamma \colon \Re^n \times \S^m
  \to \Re$ satisfy the following conditions.
  \begin{itemize}
  \item[(a)] $\alpha_c, \beta_c, \gamma$ are continuously
    differentiable for all $c > 0$.
  \item[(b)] $\alpha_c(x,\Lambda) > 0$ for all $x$ feasible
    for~\eqref{eq:nsdp}, $\Lambda \in \S^m$, and all $c > 0$.
  \end{itemize}
  Moreover, if $(\bar{x},\bar{\Lambda}) \in \Re^n \times \S^m$ is a
  KKT pair of~\eqref{eq:nsdp}, then the conditions below hold.
  \begin{itemize}
  \item[(c)] $\alpha_c(\bar{x},\bar{\Lambda})
    \beta_c(\bar{x},\bar{\Lambda}) = 1$ for all $c > 0$.
  \item[(d)] $\gamma(\bar{x},\bar{\Lambda}) = 0$, $\grad_x
    \gamma(\bar{x},\bar{\Lambda}) = 0$, and $\grad_{\Lambda}
    \gamma(\bar{x},\bar{\Lambda}) = 0$.
  \item[(e)] There exist neighborhoods $V_{\bar{x}}$ and
    $V_{\bar{\Lambda}}$ of $\bar{x}$ and $\bar{\Lambda}$,
    respectively, and a continuous function $\Gamma \colon V_{\bar{x}}
    \to V_{\bar{\Lambda}}$ such that $\Gamma(\bar{x}) = \bar{\Lambda}$
    and $\gamma(x,\Gamma(x)) = 0$ for all $x \in V_{\bar{x}}$.
  \end{itemize}
\end{assumption}

\begin{proposition}
  \label{prop:grad_A_c}
  Suppose that Assumption~\ref{hip:general_func}(a) holds. Then, the
  function $\mathcal{A}_c$ defined in~\eqref{eq:A_c} is continuously
  differentiable. Moreover, its gradient with respect to $x$ and
  $\Lambda$, respectively, can be written as follows:
  \begin{eqnarray*}
    \grad_x \mathcal{A}_c(x,\Lambda) & = & \grad f(x)  
    + \varphi \big( G(x),\beta_c(x,\Lambda)\Lambda \big) 
    \grad_x \alpha_c(x,\Lambda)\\
    & & {} - 2 \alpha_c(x,\Lambda) \grad G(x)^* 
    \PS \left(  \frac{\beta_c(x,\Lambda)}{2} \Lambda - G(x) \right)\\
    & & {} + \alpha_c(x,\Lambda) \left\langle \Lambda,
      \PS \left( \frac{\beta_c(x,\Lambda)}{2} \Lambda - G(x) \right)
      \right\rangle \grad_x \beta_c(x,\Lambda)\\
    & & {} - \frac{1}{2} \alpha_c(x,\Lambda) \beta_c(x,\Lambda) 
    \norm{\Lambda}_F^2 \grad_x \beta_c(x,\Lambda) 
    + \grad_x \gamma(x,\Lambda),\\[5pt]
    \grad_\Lambda \mathcal{A}_c (x,\Lambda) & = & 
    \varphi \big( G(x), \beta_c(x,\Lambda) \Lambda \big) 
    \grad_\Lambda \alpha_c(x,\Lambda) \\ 
    & & {} + \alpha_c(x,\Lambda) \Bigg[ \left\langle \Lambda,
      \PS \left( \frac{\beta_c(x,\Lambda)}{2} \Lambda - G(x) \right)
    \right\rangle \grad_{\Lambda} \beta_c(x,\Lambda) \\
    & & \quad + \beta_c(x,\Lambda) \PS \left( \frac{\beta_c(x,\Lambda)}{2} 
      \Lambda - G(x) \right)\\
    & & \quad - \frac{1}{2} \beta_c(x,\Lambda) \norm{\Lambda}_F^2
    \grad_{\Lambda}  \beta_c(x,\Lambda) - \frac{1}{2} 
    \beta_c(x,\Lambda)^2 \Lambda \Bigg] + \grad_\Lambda \gamma(x,\Lambda).
  \end{eqnarray*}
\end{proposition}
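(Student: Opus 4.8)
The plan is to first settle the differentiability claim and then obtain the two partial gradients by repeated use of the product and chain rules together with Lemma~\ref{lem:derivatives}. For differentiability, I would note that $f$ and $G$ are twice continuously differentiable, that $\alpha_c,\beta_c,\gamma$ are continuously differentiable by Assumption~\ref{hip:general_func}(a), and that $\varphi$ is continuously differentiable, as observed right after~\eqref{eq:phi}. Since $\mathcal{A}_c$ is assembled from these by sums, products and compositions, it is continuously differentiable, giving the first assertion. For the gradient formulas it is convenient to abbreviate $\alpha := \alpha_c(x,\Lambda)$, $\beta := \beta_c(x,\Lambda)$, $W := \frac{\beta}{2}\Lambda - G(x)$, and $\Phi := \varphi(G(x),\beta\Lambda) = \norm{\PS(W)}_F^2 - \frac{\beta^2}{4}\norm{\Lambda}_F^2$, so that $\mathcal{A}_c = f + \alpha\Phi + \gamma$ and the product rule yields $\grad_x\mathcal{A}_c = \grad f + \Phi\,\grad_x\alpha + \alpha\,\grad_x\Phi + \grad_x\gamma$ and $\grad_\Lambda\mathcal{A}_c = \Phi\,\grad_\Lambda\alpha + \alpha\,\grad_\Lambda\Phi + \grad_\Lambda\gamma$. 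It then only remains to evaluate $\grad_x\Phi$ and $\grad_\Lambda\Phi$.

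To obtain $\grad_x\Phi$ I would treat the two summands of $\Phi$ separately. For $\norm{\PS(W)}_F^2$, I apply Lemma~\ref{lem:derivatives}(a) with $Q(x):=W$, giving $\grad_x\norm{\PS(W)}_F^2 = 2\,(\grad_x W)^*\PS(W)$; since $W = \frac{\beta}{2}\Lambda - G(x)$ with $\Lambda$ held fixed, Lemma~\ref{lem:derivatives}(c), applied with $\xi=\beta/2$ and matrix $\Lambda$, handles the first part, while the definition of the adjoint $\grad G(x)^*$ handles the second, so that $(\grad_x W)^*\PS(W) = \frac12\inner{\Lambda}{\PS(W)}\grad_x\beta - \grad G(x)^*\PS(W)$. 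For the second summand, $\norm{\Lambda}_F^2$ is constant in $x$, hence $\grad_x(-\frac{\beta^2}{4}\norm{\Lambda}_F^2) = -\frac12\beta\norm{\Lambda}_F^2\grad_x\beta$. Substituting these into the expression for $\grad_x\mathcal{A}_c$ above reproduces the stated formula.

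The $\Lambda$-gradient is computed in the same spirit, and this is where the only genuine subtlety lies: in $W=\frac{\beta}{2}\Lambda - G(x)$ the variable $\Lambda$ appears both inside $\beta_c$ and as the matrix factor, so a naive product rule is not enough. I would handle this by writing $W = T(\Lambda) - G(x)$ with $T(\Lambda):=\eta(\Lambda)\Lambda$ and $\eta(\Lambda):=\tfrac12\beta_c(x,\Lambda)$, and invoking Lemma~\ref{lem:derivatives}(d), which gives precisely $(\grad_\Lambda W)^*Z = \frac12\inner{\Lambda}{Z}\grad_\Lambda\beta + \frac{\beta}{2}Z$. Combined with the $\S^m$-domain version of Lemma~\ref{lem:derivatives}(a), this yields $\grad_\Lambda\norm{\PS(W)}_F^2 = \inner{\Lambda}{\PS(W)}\grad_\Lambda\beta + \beta\,\PS(W)$. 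For the remaining summand I use the product rule on $\beta^2$ and $\norm{\Lambda}_F^2$ together with $\grad_\Lambda\norm{\Lambda}_F^2 = 2\Lambda$, obtaining $\grad_\Lambda(-\frac{\beta^2}{4}\norm{\Lambda}_F^2) = -\frac12\beta\norm{\Lambda}_F^2\grad_\Lambda\beta - \frac12\beta^2\Lambda$. Plugging the resulting $\grad_\Lambda\Phi$ into the formula for $\grad_\Lambda\mathcal{A}_c$ gives the second identity. The whole argument is thus routine calculus once the composite term $W$ is differentiated correctly; the one point that needs care is the double dependence on $\Lambda$, which Lemma~\ref{lem:derivatives}(d) resolves cleanly.
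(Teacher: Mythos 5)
Your proposal is correct and follows essentially the same route as the paper, which simply invokes Lemma~\ref{lem:derivatives}(a),(c),(d) together with ``some simple calculations''; you have carried out exactly those calculations, including the one point of real care, namely using Lemma~\ref{lem:derivatives}(d) to handle the double dependence on $\Lambda$ in the term $\tfrac{1}{2}\beta_c(x,\Lambda)\Lambda$. All intermediate expressions check out against the stated formulas.
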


\begin{proof}
  The continuous differentiability of $\mathcal{A}_c$ follows from
  Assumption~\ref{hip:general_func}(a) and the fact that $f$, $G$, and
  $\varphi$ are continuously differentiable. For the gradient's
  formula, we use Lemma~\ref{lem:derivatives}(a),(c),(d) and some
  simple calculations.
\end{proof}

Before proving the exactness results, we will first show the relation
between the function~$\mathcal{A}_c$ and the objective function~$f$
of~\eqref{eq:nsdp}. As we can see in the next propositions, the values
of~$\mathcal{A}_c$ and~$f$ at KKT points coincide, but if a point is
only feasible, then a simple inequality holds.

\begin{proposition}
  \label{prop:A_c_feasible}
  Suppose that Assumption~\ref{hip:general_func}(b) holds. Let $x \in
  \Re^n$ be a feasible point of~\eqref{eq:nsdp}. Then,
  $\mathcal{A}_c(x,\Lambda) \le f(x) + \gamma(x,\Lambda)$ for all
  $\Lambda \in \S^m$ and all $c > 0$.
\end{proposition}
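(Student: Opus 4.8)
The plan is to substitute the definition of $\mathcal{A}_c$ from~\eqref{eq:A_c} directly into the claimed inequality and reduce everything to a single sign condition. Writing out
\[
\mathcal{A}_c(x,\Lambda) = f(x) + \alpha_c(x,\Lambda)\,\varphi\big(G(x),\beta_c(x,\Lambda)\Lambda\big) + \gamma(x,\Lambda),
\]
the desired bound $\mathcal{A}_c(x,\Lambda) \le f(x) + \gamma(x,\Lambda)$ is, after cancelling the common terms $f(x)$ and $\gamma(x,\Lambda)$ from both sides, equivalent to
\[
\alpha_c(x,\Lambda)\,\varphi\big(G(x),\beta_c(x,\Lambda)\Lambda\big) \le 0.
\]
So the entire proposition amounts to showing this product is nonpositive for every $\Lambda \in \S^m$ and every $c > 0$.

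The next step is to exhibit the two factors as having opposite (weak) signs. First, feasibility of $x$ for~\eqref{eq:nsdp} means precisely $G(x) \in \S^m_+$, which is exactly the hypothesis needed to apply Lemma~\ref{lem:Phi}(b) with $Y := G(x)$ and $Z := \beta_c(x,\Lambda)\Lambda$; note that Lemma~\ref{lem:Phi}(b) holds for \emph{all} $Z \in \S^m$, so the particular value of $\beta_c(x,\Lambda)$ is irrelevant and no sign assumption on $\beta_c$ is required. This yields $\varphi\big(G(x),\beta_c(x,\Lambda)\Lambda\big) \le 0$. Second, since $x$ is feasible, Assumption~\ref{hip:general_func}(b) gives $\alpha_c(x,\Lambda) > 0$. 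The product of a strictly positive scalar and a nonpositive scalar is nonpositive, which establishes the reduced inequality and hence the proposition.

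I do not anticipate any genuine obstacle here: the result is an essentially immediate consequence of Lemma~\ref{lem:Phi}(b) together with the positivity of $\alpha_c$ on feasible points. The only point deserving care is bookkeeping, namely verifying that the hypotheses of Lemma~\ref{lem:Phi}(b) are met by checking that feasibility of $x$ coincides with $G(x)\in\S^m_+$, and observing that the conclusion is uniform in $\Lambda$ and $c$ precisely because both Lemma~\ref{lem:Phi}(b) and Assumption~\ref{hip:general_func}(b) hold for all such choices.
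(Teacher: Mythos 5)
Your proof is correct and follows essentially the same route as the paper: reduce to the sign of $\alpha_c(x,\Lambda)\,\varphi\big(G(x),\beta_c(x,\Lambda)\Lambda\big)$, apply Lemma~\ref{lem:Phi}(b) using $G(x)\in\S^m_+$, and invoke Assumption~\ref{hip:general_func}(b) for $\alpha_c(x,\Lambda)>0$. No issues.
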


\begin{proof}
  Let $\Lambda \in \S^m$ and $c > 0$ be taken arbitrarily. Since $x$
  is feasible for~\eqref{eq:nsdp}, we have $G(x) \in \S^m_+$. Thus,
  Lemma~\ref{lem:Phi}(b) shows that $\varphi (G(x),\beta_c(x,\Lambda)
  \Lambda) \le 0$ is satisfied. The proof is complete because
  $\alpha_c(x,\Lambda) > 0$ also holds from
  Assumption~\ref{hip:general_func}(b).
\end{proof}

\begin{proposition}
  \label{prop:nsdp2A_c}
  Suppose that Assumption~\ref{hip:general_func} holds.  Let
  $(x,\Lambda) \in \Re^n \times \S^m$ be a KKT pair
  of~\eqref{eq:nsdp}. Then, $(x,\Lambda)$ is also
  stationary of $\mathcal{A}_c$, and
  $\mathcal{A}_c(x,\Lambda) = f(x)$ for all $c > 0$.
\end{proposition}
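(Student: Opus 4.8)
The plan is to verify the two assertions separately by substituting the KKT structure of $(x,\Lambda)$ into the closed-form expressions already at hand. A preliminary observation does most of the work: since a KKT pair is in particular feasible, Assumption~\ref{hip:general_func}(b) gives $\alpha_c(x,\Lambda) > 0$, and together with Assumption~\ref{hip:general_func}(c) this forces $\beta_c(x,\Lambda) = 1/\alpha_c(x,\Lambda) > 0$. I would record this positivity first, since it is what allows me to regard $\tfrac{\beta_c(x,\Lambda)}{2}\Lambda$ and $\beta_c(x,\Lambda)\Lambda$ as elements of $\S^m_+$.

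For the value assertion $\mathcal{A}_c(x,\Lambda) = f(x)$, I note that the complementarity condition $\Lambda \circ G(x) = 0$ and Lemma~\ref{lem:properties2} (equivalence of (a) and (b)) give $\inner{G(x)}{\Lambda} = 0$, hence $\inner{G(x)}{\beta_c(x,\Lambda)\Lambda} = 0$. Since $G(x)$ and $\beta_c(x,\Lambda)\Lambda$ both lie in $\S^m_+$, Lemma~\ref{lem:Phi}(a) yields $\varphi(G(x),\beta_c(x,\Lambda)\Lambda) = 0$. Combined with $\gamma(x,\Lambda) = 0$ from Assumption~\ref{hip:general_func}(d), every term of~\eqref{eq:A_c} beyond $f(x)$ vanishes.

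The key step unlocking the stationarity assertion is the identity
\begin{displaymath}
  \PS\left(\frac{\beta_c(x,\Lambda)}{2}\Lambda - G(x)\right) = \frac{\beta_c(x,\Lambda)}{2}\Lambda,
\end{displaymath}
which I would obtain by applying Lemma~\ref{lem:properties2} (equivalence of (b) and (c)) to $Y := \tfrac{\beta_c(x,\Lambda)}{2}\Lambda$ and $Z := G(x)$: both are in $\S^m_+$ (here $\beta_c > 0$ is used) and $\inner{Y}{Z} = 0$, so $Y - \PS(Y-Z) = 0$. Substituting this and $\varphi = 0$ into the gradient formulas of Proposition~\ref{prop:grad_A_c}, the computation becomes pure bookkeeping. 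In $\grad_x\mathcal{A}_c$ the projection term collapses to $-2\alpha_c\grad G(x)^*(\tfrac{\beta_c}{2}\Lambda) = -\grad G(x)^*\Lambda$ via $\alpha_c\beta_c = 1$, the two terms carrying $\grad_x\beta_c$ cancel because $\inner{\Lambda}{\tfrac{\beta_c}{2}\Lambda} = \tfrac{\beta_c}{2}\norm{\Lambda}_F^2$, and $\grad_x\gamma = 0$; what survives is $\grad f(x) - \grad G(x)^*\Lambda = \grad_x L(x,\Lambda)$, which is zero by~\eqref{eq:nsdp_kkt}. The case of $\grad_\Lambda\mathcal{A}_c$ is analogous: after the same substitution the bracketed expression telescopes to zero (the $\grad_\Lambda\beta_c$ terms cancel, and $\beta_c\cdot\tfrac{\beta_c}{2}\Lambda - \tfrac{1}{2}\beta_c^2\Lambda = 0$), and $\grad_\Lambda\gamma = 0$.

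I anticipate no genuine obstacle; the difficulty is purely organizational. The one step deserving care is the projection identity above, since it is what turns the abstract gradient formulas into the Lagrangian gradient, and it depends on having first established $\beta_c(x,\Lambda) > 0$ so that Lemma~\ref{lem:properties2} is applicable. It is worth noting that Assumption~\ref{hip:general_func}(e) plays no role here; it presumably becomes relevant only for the exactness of local minimizers.
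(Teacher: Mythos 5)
Your proposal is correct and follows essentially the same route as the paper's own proof: establish $\beta_c(x,\Lambda)>0$ from Assumption~\ref{hip:general_func}(b),(c), obtain $\varphi(G(x),\beta_c(x,\Lambda)\Lambda)=0$ from Lemma~\ref{lem:Phi}(a) and the projection identity $\PS(\tfrac{\beta_c}{2}\Lambda - G(x))=\tfrac{\beta_c}{2}\Lambda$ from Lemma~\ref{lem:properties2}, then substitute into the gradient formulas of Proposition~\ref{prop:grad_A_c} together with Assumption~\ref{hip:general_func}(d). Your explicit tracking of the cancellations (the $\grad_x\beta_c$ and $\grad_\Lambda\beta_c$ terms, and the collapse to $\grad_x L(x,\Lambda)$) is a correct elaboration of the ``simple calculations'' the paper leaves implicit, and your remark that item~(e) is not used here is accurate.
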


\begin{proof}
  Let $c > 0$ be arbitrarily given and recall the formulas of $\grad_x
  \mathcal{A}_c(x,\Lambda)$ and $\grad_\Lambda
  \mathcal{A}_c(x,\Lambda)$ given in
  Proposition~\ref{prop:grad_A_c}. From
  Assumption~\ref{hip:general_func}(b),(c), we have
  $\beta_c(x,\Lambda) > 0$. So, from the KKT
  conditions~\eqref{eq:nsdp_kkt}, $G(x) \in \S^m_+$,
  $\beta_c(x,\Lambda) \Lambda \in \S^m_+$, and
  $\inner{G(x)}{\beta_c(x,\Lambda)\Lambda} =
  0$ also hold, which imply that
  \begin{equation}
    \label{eq:prop:nsdp2A_c.1}
    \varphi \big( G(x),\beta_c(x,\Lambda) \Lambda 
    \big) = 0 
  \end{equation}
  from Lemma~\ref{lem:Phi}(a). Moreover, Lemma~\ref{lem:properties2}
  shows that
  \begin{equation}
    \label{eq:prop:nsdp2A_c.2}
    \PS \left( \frac{\beta_c(x,\Lambda)}{2} \Lambda -  
      G(x) \right) =  \frac{\beta_c(x,\Lambda)}{2} 
    \Lambda.
  \end{equation}
  The equalities~\eqref{eq:prop:nsdp2A_c.1},
  \eqref{eq:prop:nsdp2A_c.2} and Assumption~\ref{hip:general_func}(d)
  yield $\grad_\Lambda \mathcal{A}_c (x,\Lambda) =
  0$. Moreover, from~\eqref{eq:prop:nsdp2A_c.2} and
  Assumption~\ref{hip:general_func}(c), we have
  \begin{displaymath}
    \grad f(x) - 2 \alpha_c(x,\Lambda) \grad G(x)^* 
    \PS \left( \frac{\beta_c(x,\Lambda)}{2} \Lambda 
      - G(x) \right) = \grad f(x) 
    - \grad G(x)^* \Lambda.
  \end{displaymath}
  So, once again using Assumption~\ref{hip:general_func}(d),
  equalities~\eqref{eq:prop:nsdp2A_c.1}, \eqref{eq:prop:nsdp2A_c.2}
  and the KKT condition $\grad f(x) - \grad G(x)^*
  \Lambda = 0$, we can conclude that $\grad_x
  \mathcal{A}_c(x,\Lambda) = 0$ holds. Finally,
  \eqref{eq:prop:nsdp2A_c.1} and Assumption~\ref{hip:general_func}(d)
  also yields $\mathcal{A}_c(x,\Lambda) = f(x)$, and
  the proof is complete.
\end{proof}

The above proposition shows that a KKT pair of~\eqref{eq:nsdp} is
stationary of~$\mathcal{A}_c$, and this assertion does not depend on
the parameter~$c$. The exactness properties of~$\mathcal{A}_c$ can be
shown only if the other implication also holds, that is, a stationary
point of~$\mathcal{A}_c$ should be a KKT pair of~\eqref{eq:nsdp}, at
least when $c$ is greater than some threshold value. If such a
statement holds, then the exactness of $\mathcal{A}_c$ is guaranteed,
as it can be seen below. Before that, we recall that the global
(local) minimizers of problems~\eqref{eq:nsdp} and~\eqref{eq:unc_nlp}
with $\Psi_c := \mathcal{A}_c$ are, respectively, denoted by
$G_{\mbox{\tiny{NSDP}}}$ ($L_{\mbox{\tiny{NSDP}}}$) and
$G_{\mbox{\tiny{NLP}}}(c)$ ($L_{\mbox{\tiny{NLP}}}(c)$). We also
consider the following assumption:
\begin{assumption}
  \label{hip:cq}
  The sets $G_{\mbox{\tiny{NSDP}}}$ and $G_{\mbox{\tiny{NLP}}}(c)$ are
  nonempty for all $c > 0$. Moreover, for every $x \in
  G_{\mbox{\tiny{NSDP}}}$ there is at least one $\Lambda \in \S^m$ such that 
  $(x,\Lambda)$ is a KKT pair of \eqref{eq:nsdp}.
\end{assumption}

\noindent The existence of optimal solutions of the unconstrained
problem is guaranteed if an extraneous compact set is
considered~\cite{DPG89}, or by exploiting some properties of the
problem, as coercivity and monotonicity~\cite{AS10}. Furthermore, 
we can ensure the existence of a Lagrange multiplier by imposing 
some constraint qualification. Now, if we
define
\begin{eqnarray}
  \tilde{G}_{\mbox{\tiny{NSDP}}} & := & \big\{ (x,\Lambda) \colon 
  x \in G_{\mbox{\tiny{NSDP}}} \mbox{ and } \label{eq:tilde_G_NSDP} \\
  & & \quad \Lambda \mbox{ is a corresponding Lagrange multiplier} \big\},
  \nonumber
\end{eqnarray}
then, using Assumption~\ref{hip:cq}, we obtain
$\tilde{G}_{\mbox{\tiny{NSDP}}} \ne \emptyset$.

\begin{lemma}
  \label{lem:global_subset}
  Suppose that Assumptions~\ref{hip:general_func} and~\ref{hip:cq}
  hold. Then for all $c > 0$, $G_{\mbox{\tiny{NLP}}}(c) \subseteq
  \tilde{G}_{\mbox{\tiny{NSDP}}}$ implies $G_{\mbox{\tiny{NLP}}}(c) =
  \tilde{G}_{\mbox{\tiny{NSDP}}}$.
\end{lemma}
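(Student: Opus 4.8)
The plan is to establish the reverse inclusion $\tilde{G}_{\mbox{\tiny{NSDP}}} \subseteq G_{\mbox{\tiny{NLP}}}(c)$, since the inclusion $G_{\mbox{\tiny{NLP}}}(c) \subseteq \tilde{G}_{\mbox{\tiny{NSDP}}}$ is already assumed. The central observation I would exploit is that $\mathcal{A}_c$ takes a single common value on all of $\tilde{G}_{\mbox{\tiny{NSDP}}}$, namely the optimal objective value of~\eqref{eq:nsdp}. Indeed, any $(x,\Lambda) \in \tilde{G}_{\mbox{\tiny{NSDP}}}$ is a KKT pair, because by definition~$x$ is a global minimizer of~\eqref{eq:nsdp} and~$\Lambda$ is a corresponding multiplier; hence Proposition~\ref{prop:nsdp2A_c} yields $\mathcal{A}_c(x,\Lambda) = f(x)$. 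Since every point of $G_{\mbox{\tiny{NSDP}}}$ attains the same minimal value of~$f$, this number does not depend on the chosen pair.

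First I would invoke Assumption~\ref{hip:cq} to select some $(x^*,\Lambda^*) \in G_{\mbox{\tiny{NLP}}}(c)$, which is nonempty. The assumed inclusion places $(x^*,\Lambda^*)$ in $\tilde{G}_{\mbox{\tiny{NSDP}}}$, so by the observation above $\mathcal{A}_c(x^*,\Lambda^*) = f(x^*)$ equals the optimal value of~\eqref{eq:nsdp}. As $(x^*,\Lambda^*)$ is a global minimizer of~$\mathcal{A}_c$, this identifies the global minimum of problem~\eqref{eq:unc_nlp} as exactly the optimal value of~\eqref{eq:nsdp}.

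Finally, I would take an arbitrary $(\bar{x},\bar{\Lambda}) \in \tilde{G}_{\mbox{\tiny{NSDP}}}$ and compute $\mathcal{A}_c(\bar{x},\bar{\Lambda}) = f(\bar{x})$, which coincides with $f(x^*)$ because both $\bar{x}$ and $x^*$ lie in $G_{\mbox{\tiny{NSDP}}}$. Thus $\mathcal{A}_c(\bar{x},\bar{\Lambda})$ equals the global minimum of~$\mathcal{A}_c$ identified in the previous step, which forces $(\bar{x},\bar{\Lambda}) \in G_{\mbox{\tiny{NLP}}}(c)$, giving the reverse inclusion and hence the desired equality. The point to handle carefully is not any hard estimate but rather the bookkeeping: one must extract a genuine minimizer of~$\mathcal{A}_c$ from $G_{\mbox{\tiny{NLP}}}(c)$ in order to pin down its global minimum value, which is precisely why both the nonemptiness guaranteed by Assumption~\ref{hip:cq} and the hypothesized inclusion are needed. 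Without a witness in $G_{\mbox{\tiny{NLP}}}(c)$, one could not conclude that the common value~$f(x)$ attained on $\tilde{G}_{\mbox{\tiny{NSDP}}}$ is in fact the global infimum of~$\mathcal{A}_c$.
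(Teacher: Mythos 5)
Your proposal is correct and follows essentially the same route as the paper: both arguments use Proposition~\ref{prop:nsdp2A_c} to show $\mathcal{A}_c$ equals the common optimal value $f(x)$ on all of $\tilde{G}_{\mbox{\tiny{NSDP}}}$, take a witness in $G_{\mbox{\tiny{NLP}}}(c)$ (nonempty by Assumption~\ref{hip:cq}) that the hypothesis places in $\tilde{G}_{\mbox{\tiny{NSDP}}}$, and conclude via the resulting chain of equalities that every element of $\tilde{G}_{\mbox{\tiny{NSDP}}}$ attains the global minimum of $\mathcal{A}_c$. Your explicit remark that the nonemptiness of $G_{\mbox{\tiny{NLP}}}(c)$ is what pins down the global minimum value is a nice clarification of a step the paper leaves implicit.
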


\begin{proof}
  Let $c > 0$ be arbitrarily given and $(\bar{x},\bar{\Lambda}) \in
  G_{\mbox{\tiny{NLP}}}(c)$. By assumption, $(\bar{x},\bar{\Lambda})
  \in \tilde{G}_{\mbox{\tiny{NSDP}}}$ also holds, and thus,
  $(\bar{x},\bar{\Lambda})$ is a KKT pair of~\eqref{eq:nsdp}. From
  Proposition~\ref{prop:nsdp2A_c}, we obtain
  \begin{equation}
    \label{eq:global_subset.1}
    f(\bar{x}) = \mathcal{A}_c(\bar{x},\bar{\Lambda}). 
  \end{equation}
  Recall that $\tilde{G}_{\mbox{\tiny{NSDP}}} \ne \emptyset$ because
  of Assumption~\ref{hip:cq}. So, take $(\tilde{x},\tilde{\Lambda})
  \in \tilde{G}_{\mbox{\tiny{NSDP}}}$, with
  $(\tilde{x},\tilde{\Lambda}) \ne (\bar{x},\bar{\Lambda})$.  Since
  $(\tilde{x},\tilde{\Lambda})$ satisfies the KKT conditions
  of~\eqref{eq:nsdp}, once again from Proposition~\ref{prop:nsdp2A_c},
  we have $f(\tilde{x}) = \mathcal{A}_c(\tilde{x},\tilde{\Lambda})$.
  This fact, together with~\eqref{eq:global_subset.1}, and the
  definition of global solutions, gives
  \begin{displaymath}
    \mathcal{A}_c(\tilde{x},\tilde{\Lambda}) = f(\tilde{x}) \le
    f(\bar{x}) = \mathcal{A}_c(\bar{x},\bar{\Lambda}) \le
    \mathcal{A}_c(\tilde{x},\tilde{\Lambda}),
  \end{displaymath}
  which shows that the whole expression above holds with equalities.
  Therefore, $(\tilde{x},\tilde{\Lambda}) \in
  G_{\mbox{\tiny{NLP}}}(c)$, which completes the proof.
\end{proof}

\begin{theorem}
  \label{the:global_local}
  Suppose that Assumption~\ref{hip:general_func} and~\ref{hip:cq}
  hold. Assume also that there exists $\hat{c} > 0$ such that every
  stationary point of $\mathcal{A}_c$ is also a KKT pair
  of~\eqref{eq:nsdp} for all $c \ge \hat{c}$. Then, $\mathcal{A}_c$ is
  an exact augmented Lagrangian function associated
  to~\eqref{eq:nsdp}, in other words:
  \begin{itemize}
  \item[(a)] $G_{\mbox{\tiny{NLP}}}(c) = \tilde{G}_{\mbox{\tiny{NSDP}}}$
    for all $c \ge \hat{c}$.
  \item[(b)] $L_{\mbox{\tiny{NLP}}}(c) \subseteq \big\{ (x,\Lambda) \colon
    x \in L_{\mbox{\tiny{NSDP}}} \mbox{ and } \Lambda \mbox{ is a
      corresponding multiplier} \big\}$ for all $c \ge \hat{c}$.
  \end{itemize}
\end{theorem}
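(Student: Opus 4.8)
The plan is to handle the global case (a) and the local case (b) separately, in both cases exploiting the theorem's standing hypothesis that stationary points of $\mathcal{A}_c$ are KKT pairs of~\eqref{eq:nsdp} for $c \ge \hat{c}$, together with the value identity $\mathcal{A}_c = f$ at KKT pairs from Proposition~\ref{prop:nsdp2A_c} and the feasible-point bound from Proposition~\ref{prop:A_c_feasible}.

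For part (a), I would first reduce to proving a single inclusion. By Lemma~\ref{lem:global_subset} it suffices to show $G_{\mbox{\tiny{NLP}}}(c) \subseteq \tilde{G}_{\mbox{\tiny{NSDP}}}$, since the lemma upgrades this inclusion to equality. So take $(\bar{x},\bar{\Lambda}) \in G_{\mbox{\tiny{NLP}}}(c)$. As a global minimizer of the unconstrained problem~\eqref{eq:unc_nlp}, it is a stationary point of $\mathcal{A}_c$, hence by hypothesis a KKT pair of~\eqref{eq:nsdp}; in particular $\bar{x}$ is feasible, $\bar{\Lambda}$ is a corresponding multiplier, and Proposition~\ref{prop:nsdp2A_c} gives $\mathcal{A}_c(\bar{x},\bar{\Lambda}) = f(\bar{x})$. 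To identify $\bar{x}$ as a global minimizer of~\eqref{eq:nsdp}, I would pick any $x^\star \in G_{\mbox{\tiny{NSDP}}}$ (nonempty by Assumption~\ref{hip:cq}) with a multiplier $\Lambda^\star$ making $(x^\star,\Lambda^\star)$ a KKT pair (again Assumption~\ref{hip:cq}). Then Proposition~\ref{prop:nsdp2A_c} gives $\mathcal{A}_c(x^\star,\Lambda^\star) = f(x^\star)$, and chaining global optimality of $(\bar{x},\bar{\Lambda})$ in~\eqref{eq:unc_nlp} with the two value identities yields $f(\bar{x}) = \mathcal{A}_c(\bar{x},\bar{\Lambda}) \le \mathcal{A}_c(x^\star,\Lambda^\star) = f(x^\star)$. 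Since $\bar{x}$ is feasible and $x^\star$ is a global minimizer, the reverse inequality $f(x^\star) \le f(\bar{x})$ holds, so $f(\bar{x}) = f(x^\star)$ and $\bar{x} \in G_{\mbox{\tiny{NSDP}}}$. Hence $(\bar{x},\bar{\Lambda}) \in \tilde{G}_{\mbox{\tiny{NSDP}}}$, giving the inclusion and, via Lemma~\ref{lem:global_subset}, the equality in~(a).

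For part (b), take $(\bar{x},\bar{\Lambda}) \in L_{\mbox{\tiny{NLP}}}(c)$. As a local minimizer it is stationary, so by hypothesis a KKT pair; thus $\bar{x}$ is feasible, $\bar{\Lambda}$ is a corresponding multiplier, and $\mathcal{A}_c(\bar{x},\bar{\Lambda}) = f(\bar{x})$. The goal is to show $\bar{x} \in L_{\mbox{\tiny{NSDP}}}$. Here I would invoke Assumption~\ref{hip:general_func}(e), which---because $(\bar{x},\bar{\Lambda})$ is a KKT pair---supplies neighborhoods $V_{\bar{x}},V_{\bar{\Lambda}}$ and a continuous map $\Gamma \colon V_{\bar{x}} \to V_{\bar{\Lambda}}$ with $\Gamma(\bar{x}) = \bar{\Lambda}$ and $\gamma(x,\Gamma(x)) = 0$ on $V_{\bar{x}}$. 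Following the curve $x \mapsto (x,\Gamma(x))$, continuity of $\Gamma$ keeps $(x,\Gamma(x))$ inside the neighborhood on which $(\bar{x},\bar{\Lambda})$ is a local minimizer of $\mathcal{A}_c$. Then for feasible $x$ near $\bar{x}$, Proposition~\ref{prop:A_c_feasible} and $\gamma(x,\Gamma(x)) = 0$ give $\mathcal{A}_c(x,\Gamma(x)) \le f(x) + \gamma(x,\Gamma(x)) = f(x)$, so that $f(\bar{x}) = \mathcal{A}_c(\bar{x},\bar{\Lambda}) \le \mathcal{A}_c(x,\Gamma(x)) \le f(x)$ for all feasible $x$ near $\bar{x}$. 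This is exactly local optimality of $\bar{x}$ for~\eqref{eq:nsdp}, completing~(b).

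I expect the crux to lie in part (b): converting joint local minimality in $(x,\Lambda)$-space into local minimality of $\bar{x}$ over the feasible set in $x$-space alone. A naive comparison fails because one cannot freely vary $\Lambda$ while keeping the augmented term under control. Assumption~\ref{hip:general_func}(e) is precisely what resolves this---the section $\Gamma$ along which $\gamma$ vanishes provides an admissible path in the product space whose projection sweeps out a full neighborhood of $\bar{x}$, and along which the feasible-point bound of Proposition~\ref{prop:A_c_feasible} sharpens to $\mathcal{A}_c \le f$. The only care needed is to shrink $V_{\bar{x}}$ so that the lifted curve stays within the local-minimality neighborhood, which is immediate from continuity of $\Gamma$.
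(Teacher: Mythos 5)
Your proposal is correct and follows essentially the same route as the paper's proof: part (a) reduces to one inclusion via Lemma~\ref{lem:global_subset} and uses the value identity of Proposition~\ref{prop:nsdp2A_c} at both the NLP global minimizer and a reference NSDP global minimizer, while part (b) uses the section $\Gamma$ from Assumption~\ref{hip:general_func}(e) together with Proposition~\ref{prop:A_c_feasible} to push local minimality down to the feasible set. The only cosmetic difference is that you split the comparison in (a) into two one-sided inequalities rather than the paper's single sandwich chain.
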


\begin{proof}
  (a) Let $c \ge \hat{c}$ be arbitrarily given. From
  Lemma~\ref{lem:global_subset}, we only need to prove that
  $G_{\mbox{\tiny{NLP}}}(c) \subseteq \tilde{G}_{\mbox{\tiny{NSDP}}}$.
  Let $(\bar{x},\bar{\Lambda}) \in
  G_{\mbox{\tiny{NLP}}}(c)$. From~\eqref{eq:tilde_G_NSDP}, we
    need to show that $\bar{x} \in G_{\mbox{\tiny{NSDP}}}$,
    with $\bar{\Lambda}$ as a corresponding Lagrange multiplier.
  Then, $(\bar{x},\bar{\Lambda})$ is a stationary point
  of~$\mathcal{A}_c$.  From this theorem's assumption, it is also a
  KKT pair of~\eqref{eq:nsdp}, which implies $f(\bar{x}) =
  \mathcal{A}_c(\bar{x},\bar{\Lambda})$ from
  Proposition~\ref{prop:nsdp2A_c}. Now, assume that there exists
  $\tilde{x} \in G_{\mbox{\tiny{NSDP}}}$ such that $\tilde{x} \ne
  \bar{x}$. Since~$\tilde{x}$ satisfies a constraint qualification
  from Assumption~\ref{hip:cq}, there exists $\tilde{\Lambda}$ such
  that $(\tilde{x},\tilde{\Lambda})$ satisfies the KKT conditions
  of~\eqref{eq:nsdp}. Once again by Proposition~\ref{prop:nsdp2A_c},
  we have $f(\tilde{x}) = \mathcal{A}_c(\tilde{x},\tilde{\Lambda})$.
  So, the definition of global minimizers gives
  \begin{displaymath}
    \mathcal{A}_c(\tilde{x},\tilde{\Lambda}) = f(\tilde{x}) \le
    f(\bar{x}) = \mathcal{A}_c(\bar{x},\bar{\Lambda}) \le
    \mathcal{A}_c(\tilde{x},\tilde{\Lambda}),
  \end{displaymath}
  which shows that the whole expression above holds with equalities.
  Therefore, $\bar{x} \in G_{\mbox{\tiny{NSDP}}}$, with
  $\bar{\Lambda}$ as a corresponding Lagrange multiplier.\\

  \noindent (b) Let $c \ge \hat{c}$ and $(\bar{x},\bar{\Lambda}) \in
  L_{\mbox{\tiny{NLP}}}(c)$. Since $(\bar{x},\bar{\Lambda})$ is a
  stationary point of~$\mathcal{A}_c$, it is also a KKT pair
  of~\eqref{eq:nsdp} by theorem's assumption. So, from
  Proposition~\ref{prop:nsdp2A_c}, we have
  \begin{equation}
    \label{eq:global_local.2}
    \mathcal{A}_c(\bar{x},\bar{\Lambda}) = f(\bar{x}).
  \end{equation}
  Moreover, from the definition of local minimizer, there exist
  neighborhoods $V_{\bar{x}}$ and $V_{\bar{\Lambda}}$ of $\bar{x}$ and
  $\bar{\Lambda}$, respectively, such that
  \begin{displaymath}
    \mathcal{A}_c(\bar{x}, \bar{\Lambda}) \le \mathcal{A}_c (x,\Lambda)
    \quad \mbox{for all } (x,\Lambda) \in V_{\bar{x}} \times V_{\bar{\Lambda}}.
  \end{displaymath}
  Here, we suppose that $V_{\bar{x}}$ and $V_{\bar{\Lambda}}$ are
  sufficiently small, which guarantees the existence of a function
  $\Gamma$ as in Assumption~\ref{hip:general_func}(e). In particular,
  we obtain $\mathcal{A}_c(\bar{x}, \bar{\Lambda}) \le \mathcal{A}_c
  (x,\Gamma(x))$ for all $x \in V_{\bar{x}}$. This inequality,
  together with~\eqref{eq:global_local.2}, shows that
  \begin{equation}
    f(\bar{x}) \le \mathcal{A}_c (x,\Gamma(x))
    \quad \mbox{for all } x \in V_{\bar{x}}.
  \end{equation}
  Thus, from Proposition~\ref{prop:A_c_feasible} and
  Assumption~\ref{hip:general_func}(e), we get
  \begin{displaymath}
    f(\bar{x}) \le \mathcal{A}_c (x,\Gamma(x)) \le
    f(x) + \gamma(x,\Gamma(x)) = f(x)
  \end{displaymath}
  for all $x \in V_{\bar{x}}$ that is feasible for~\eqref{eq:nsdp}.
  So, we conclude that $\bar{x} \in L_{\mbox{\tiny{NSDP}}}$.
\end{proof}

The above result shows that the generalized function~$\mathcal{A}_c$ is
an exact augmented Lagrangian function if a finite penalty parameter
$\hat{c} > 0$ satisfying
\begin{displaymath}
  \mbox{``stationary of } \mathcal{A}_c \mbox{ with parameter } c 
  \, \Longrightarrow \, \mbox{KKT of \eqref{eq:nsdp}''} \quad
    \mbox{for all } c \ge \bar{c}
\end{displaymath}
is guaranteed to exist. However, even if
Assumption~\ref{hip:general_func} holds, we usually cannot expect that
such $\hat{c}$ exists. In the next section, we will observe that the
functions $\alpha_c$, $\beta_c$, and $\gamma$, used in the formula of
$\mathcal{A}_c$, should be taken carefully for such a
purpose.


\section{The proposed exact augmented Lagrangian function}
\label{sec:exact_auglag}

Here, we construct a particular exact augmented Lagrangian function by
choosing the functions $\alpha_c$, $\beta_c$, and $\gamma$, used
in~$\mathcal{A}_c$ (formula~\eqref{eq:A_c}) appropriately. Before
that, let us note that by defining
\begin{displaymath}
  \alpha_c(x,\Lambda) = \frac{c}{2}, \qquad 
  \beta_c(x,\Lambda) = \frac{2}{c}, \qquad
  \gamma(x,\Lambda) = 0,
\end{displaymath}
where $c > 0$ is the penalty parameter, we obtain the augmented
Lagrangian function for NSDP given in~\cite{CR04,SS04}. This function
is actually an extension of the classical augmented Lagrangian
function for NLP (see~\cite{Ber82a} for instance), and it is equal to
the Lagrangian function with some additional terms. However, it is not
exact in the sense of Definition~\ref{def:exactaug}.

In order to construct an augmented Lagrangian function with exactness
property, we choose a more complex~$\gamma$, that satisfies
Assumption~\ref{hip:general_func}(d),(e). As in~\cite{DPL96}, the
function~$\Gamma$ of item~(e) can be taken as a function that
estimates the value of the Lagrange multipliers associated to a
point. One possibility for such an estimate for NSDP problems is given
in~\cite{Han14}, which in turn extends the ones proposed
in~\cite{FSF12,GP79}. Basically, given $x \in \Re^n$, we consider the
following unconstrained problem:
\begin{equation}
  \label{eq:estimate}
  \begin{array}{ll}
    \underset{\Lambda}{\mbox{minimize}} & 
    \norm{\grad_x L(x,\Lambda)}^2 + \zeta_1^2 
    \norm{\L_{G(x)}(\Lambda)}^2_F + \zeta_2^2 r(x) \norm{\Lambda}^2_F\\
    \mbox{subject to} & \Lambda \in \S^m,
  \end{array}
\end{equation}
where $\zeta_1,\zeta_2 \in \Re$ are positive scalars, and $r \colon
\Re^n \to \Re$ denotes the residual function associated to the
feasible set, that is,
\begin{displaymath}
  r(x) := \frac{1}{2} \norm{\PS(-G(x))}^2_F = \frac{1}{2}
  \norm{\PS(G(x)) - G(x)}^2_F.
\end{displaymath}
Observe that $r(x) = 0$ if, and only if, $x$ is feasible
for~\eqref{eq:nsdp}. The idea underlying problem~\eqref{eq:estimate}
is to force KKT conditions~\eqref{eq:nsdp_kkt} to hold, except for the
feasibility of the Lagrange multiplier. Actually, this problem can be
seen as a linear least squares problem, and so its solution can be
written explicitly, when the so-called \emph{nondegeneracy} assumption
holds. It is well-known that the nondegeneracy condition,
defined below, extends the classical linear independence constraint
qualification for nonlinear programming~\cite{BS00,Sha97}, see 
also Section 4 and Corollary 2 in \cite{LFF16}. In particular, under nondegeneracy, 
Lagrange multiplies are ensured to exist at optimal points.

\begin{assumption}
  \label{hip:nondegeneracy}
  Every $x \in \Re^n$ feasible for~\eqref{eq:nsdp} is
  \emph{nondegenerate}, that is,
  \begin{displaymath}
    \S^m = \lin \mathcal{T}_{\S^m_+}(G(x)) + \mathrm{Im} \, \grad G(x),
  \end{displaymath}
  where $\mathcal{T}_{\S^m_+}(G(x))$ denotes the tangent cone of $\S^m_+$ at
  $G(x)$, $\mathrm{Im} \,\grad G(x)$ is the image of the linear map
  $\grad G(x)$, and $\lin$ means lineality space.
\end{assumption}

\begin{lemma}
  \label{lem:estimate}
  Suppose that Assumption~\ref{hip:nondegeneracy} holds. For a given
  $x \in \Re^n$, define $N \colon \Re^n \to \S^m$~as
  \begin{equation}
    \label{eq:N(x)}
    N(x) := \grad G(x) \grad G(x)^* + \zeta_1^2 \L_{G(x)}^2 + 
    \zeta_2^2 \, r(x) I.
  \end{equation}
  Then, the following statements are true.
  \begin{itemize}
  \item[(a)] $N(\cdot)$ is continuously differentiable and for all $x
    \in \Re^n$, the matrix $N(x)$ is positive definite.
  \item[(b)] The solution of problem~\eqref{eq:estimate} is unique and
    it is given by
    \begin{equation}
      \label{eq:Lambda}
      \Lambda(x) = N(x)^{-1} \grad G(x) \grad f(x).
    \end{equation}
  \item[(c)] If $(x,\Lambda) \in \Re^n \times \S^m$ is a KKT pair
    of~\eqref{eq:nsdp}, then $\Lambda(x) = \Lambda$.
  \item[(d)] The operator $\Lambda(\cdot)$ is continuously
    differentiable, and $\grad \Lambda(x) = N(x)^{-1} Q(x)$, where
    \begin{eqnarray*}
      Q(x) & := & \grad^2 G(x) \grad_x L(x,\Lambda(x)) + \grad G(x)
      \grad_{xx}^2 L(x,\Lambda(x)) \\
      & & {}- \zeta_1^2 \grad_x \big[ \L^2_{G(x)}
      (\Lambda(x)) \big] - \zeta_2^2 \grad r(x) \Lambda(x).
    \end{eqnarray*}
  \end{itemize}
\end{lemma}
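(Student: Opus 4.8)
The plan is to prove the four items in order, noting that (b) and (c) are quick consequences of the positive definiteness claimed in~(a), and that (d) follows by differentiating the normal equation that characterizes $\Lambda(\cdot)$. The heart of the lemma, and the only place where Assumption~\ref{hip:nondegeneracy} enters, is the positive definiteness of $N(x)$ in~(a). Continuous differentiability of $N(\cdot)$ is routine: $\grad G(\cdot)$ is $C^1$ because $G$ is twice continuously differentiable, the map $x \mapsto \L_{G(x)}^2$ is $C^1$ since $\L_{G(x)}$ depends linearly on $G(x)$, and $r(\cdot)$ is $C^1$ by Lemma~\ref{lem:derivatives}(a) applied to $\norm{\PS(-G(\cdot))}_F^2$. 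For positive definiteness I would compute, for arbitrary $\Lambda \in \S^m$,
\begin{displaymath}
  \inner{\Lambda}{N(x)\Lambda} = \norm{\grad G(x)^*\Lambda}^2
  + \zeta_1^2 \norm{\L_{G(x)}(\Lambda)}_F^2 + \zeta_2^2\, r(x)\norm{\Lambda}_F^2,
\end{displaymath}
using that $\grad G(x)\grad G(x)^*$ and $\L_{G(x)}$ are self-adjoint on $\S^m$. This is a sum of nonnegative terms, so it suffices to show that it can vanish only at $\Lambda = 0$.

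Here I would split according to the feasibility of $x$. If $x$ is infeasible, then $r(x) > 0$, and the vanishing of the third term alone already forces $\Lambda = 0$, so nondegeneracy is not even needed. If $x$ is feasible, then $G(x) \in \S^m_+$ and $r(x) = 0$, so the third term carries no information, and vanishing of the first two terms gives $\Lambda \in \ker \grad G(x)^*$ together with $\L_{G(x)}(\Lambda) = G(x)\circ\Lambda = 0$. The crucial linear-algebra step, which I expect to be the main obstacle, is to recast these two kernels as orthogonal complements: $\ker \grad G(x)^* = (\mathrm{Im}\,\grad G(x))^\perp$ is immediate, while $\ker \L_{G(x)} = (\lin \mathcal{T}_{\S^m_+}(G(x)))^\perp$ needs a spectral decomposition of $G(x) \in \S^m_+$. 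Writing $G(x) = \diag(D,0)$ with $D$ positive definite, the equation $G(x)\circ\Lambda = 0$ forces the leading and off-diagonal blocks of $\Lambda$ to vanish, by invertibility of the Lyapunov operator $X \mapsto DX + XD$, and what remains is exactly the description of $(\lin \mathcal{T}_{\S^m_+}(G(x)))^\perp$. Taking orthogonal complements in the nondegeneracy identity $\S^m = \lin \mathcal{T}_{\S^m_+}(G(x)) + \mathrm{Im}\,\grad G(x)$ then yields $(\lin \mathcal{T}_{\S^m_+}(G(x)))^\perp \cap (\mathrm{Im}\,\grad G(x))^\perp = \{0\}$, whence $\Lambda = 0$.

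Once $N(x)$ is known to be positive definite, item~(b) is immediate: the objective of~\eqref{eq:estimate} is a strictly convex quadratic in $\Lambda$, so its unique minimizer is the unique stationary point, obtained by setting the gradient to zero; this produces the normal equation $N(x)\Lambda = \grad G(x)\grad f(x)$ and hence $\Lambda(x) = N(x)^{-1}\grad G(x)\grad f(x)$. For item~(c), if $(x,\Lambda)$ is a KKT pair then $\grad_x L(x,\Lambda) = 0$, $G(x)\circ\Lambda = 0$, and $r(x) = 0$, so each of the three terms of the objective of~\eqref{eq:estimate} vanishes at $\Lambda$; thus $\Lambda$ attains the minimal value $0$, and uniqueness from~(b) gives $\Lambda(x) = \Lambda$.

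Finally, for item~(d), continuous differentiability of $\Lambda(\cdot)$ follows because $N(\cdot)^{-1}$ is $C^1$ (the inverse of a $C^1$, everywhere-invertible operator field) and $\grad G(\cdot)\grad f(\cdot)$ is $C^1$. To obtain the formula, I would differentiate the identity $N(x)\Lambda(x) = \grad G(x)\grad f(x)$ with respect to $x$, and then collect the terms containing $\grad \Lambda(x)$ — namely $\grad G(x)\grad G(x)^*\grad \Lambda(x)$, $\zeta_1^2 \L_{G(x)}^2 \grad \Lambda(x)$, and $\zeta_2^2 r(x)\grad \Lambda(x)$ — which reassemble into $N(x)\grad \Lambda(x)$. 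Moving the remaining terms to the other side and recognizing $\grad f(x) - \grad G(x)^*\Lambda(x) = \grad_x L(x,\Lambda(x))$, together with its $x$-derivative involving $\grad_{xx}^2 L(x,\Lambda(x))$, identifies the right-hand side with $Q(x)$, giving $\grad \Lambda(x) = N(x)^{-1}Q(x)$.
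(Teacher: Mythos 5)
Your proposal is correct, but it is worth noting that the paper does not actually prove this lemma: its ``proof'' is a one-line citation to Lemma~2.2 and Proposition~2.3 of the reference \cite{Han14}, where this multiplier estimate (an SDP analogue of the Glad--Polak estimate) is developed. Your argument supplies the missing details along what is essentially the standard route, and it correctly isolates the one place where nondegeneracy is needed. In particular, the decisive linear-algebra step --- that for feasible $x$ one has $\ker \L_{G(x)} = \big(\lin \mathcal{T}_{\S^m_+}(G(x))\big)^\perp$, so that $\ker \grad G(x)^* \cap \ker \L_{G(x)} = \{0\}$ is exactly the orthogonal-complement form of Assumption~\ref{hip:nondegeneracy} --- is right, and your block computation via the Lyapunov operator $X \mapsto DX+XD$ with $D \succ 0$ is the clean way to see it. (Do keep the hypothesis $G(x) \in \S^m_+$ explicit there: for an indefinite matrix the kernel of $\L_{G(x)}$ can contain extra directions coming from eigenvalue pairs $\lambda_i + \lambda_j = 0$, so the identification with the lineality space would fail.) Your handling of the infeasible case via $r(x)>0$, the reduction of (b) and (c) to strict convexity and to the objective vanishing at a KKT multiplier, and the derivation of (d) by implicit differentiation of the normal equation $N(x)\Lambda(x) = \grad G(x)\grad f(x)$ --- with the $\grad\Lambda(x)$ terms reassembling into $N(x)\grad\Lambda(x)$ --- all match the computation in the cited source and are consistent with the expression \eqref{eq:NLambda} used later in the paper. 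The only point deserving a touch more care is regularity bookkeeping: $r$ is $C^1$ but not $C^2$ (the squared projection is only $SC^1$), which is enough for $N(\cdot)$ and hence $\Lambda(\cdot)$ to be $C^1$ and for $Q$ to be continuous, but one should not claim more smoothness than that; the paper itself relies on exactly this $SC^1$ structure in its final remarks.
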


\begin{proof}
  See~\cite[Lemma 2.2 and Proposition~2.3]{Han14}.
\end{proof}

The augmented Lagrangian function $L_c \colon \Re^n \times \S^m \to
\Re$ that we propose is given by
\begin{equation}
  \label{eq:exactaugL}
  L_c(x,\Lambda) := f(x) + \frac{1}{2c} \left( 
    \norm{\PS (\Lambda - c\,G(x))}^2_F - \norm{\Lambda}^2_F \right)
  + \norm{N(x)(\Lambda(x)-\Lambda)}^2_F,
\end{equation}
where $\Lambda(\cdot)$ and $N(\cdot)$ are given in
Lemma~\ref{lem:estimate}.  It is equivalent to the usual augmented
Lagrangian function for NSDP, except for the last term. So, comparing
to the generalized one~\eqref{eq:A_c}, we have
\begin{equation}
  \label{eq:A_c_functions}
  \alpha_c(x,\Lambda) = \frac{c}{2}, \qquad 
  \beta_c(x,\Lambda) = \frac{2}{c}, \qquad
  \gamma(x,\Lambda) = \norm{N(x)(\Lambda(x)-\Lambda)}^2_F.
\end{equation}
Observe that the functions $\alpha_c,\beta_c,\gamma$ defined in such a
way satisfy Assumption~\ref{hip:general_func}. In fact, items~(a),
(b), and~(c) of this assumption hold trivially, and item (d) is
satisfied because of Lemma~\ref{lem:estimate}(c). The
function~$\Gamma$ of item (e) corresponds to~$\Lambda(\cdot)$, with
the necessary properties described in Lemma~\ref{lem:estimate}(c),(d).
Note that $\gamma(x,\Lambda(x)) = 0$ for all~$x$ in this case.

Now, from~\eqref{eq:N(x)} and~\eqref{eq:Lambda}, observe that
\begin{eqnarray}
  N(x) (\Lambda(x) - \Lambda) & = & \grad G(x) \grad f(x) - \grad G(x) 
  \grad G(x)^* \Lambda - \zeta_1^2 \L_{G(x)}^2(\Lambda) 
  -\zeta_2^2 r(x) \Lambda \nonumber \\
  & = & \grad G(x) \grad_x L(x,\Lambda) - \zeta_1^2 \L_{G(x)}^2 (\Lambda)
  - \zeta_2^2 r(x) \Lambda.
  \label{eq:NLambda}
\end{eqnarray}
Also, consider the following auxiliary function $Y_c \colon \Re^n
\times \S^m \to \S^m$ defined by
\begin{equation}
  \label{eq:Y_c}
  Y_c(x,\Lambda) := \PS \left( \frac{\Lambda}{c} - G(x) \right)
  - \frac{\Lambda}{c}.
\end{equation}
The gradient of $L_c(x,\Lambda)$ with respect to $x$ is given by
\begin{eqnarray}
  \grad_x L_c(x,\Lambda) & = & \grad f(x) - \grad G(x)^* 
  \PS (\Lambda - c \, G(x)) \nonumber \\
  & & {} + 2 K(x,\Lambda)^* N(x)
  (\Lambda(x) - \Lambda) \label{eq:grad_x_Lc} \\
  & = & \grad_x L(x,\Lambda) - c \grad G(x)^* Y_c(x,\Lambda)
  + 2 K(x,\Lambda)^* N(x) (\Lambda(x) - \Lambda),\nonumber
\end{eqnarray}
with
\begin{eqnarray*}
  K(x,\Lambda) & := & \grad_x \big[ N(x)(\Lambda(x) - \Lambda) \big] \\
  & = & \grad_x \big[ \grad G(x) \grad_x L(x,\Lambda) - 
  \zeta_1^2 \L_{G(x)}^2 (\Lambda) - \zeta_2^2 r(x) \Lambda \big],
\end{eqnarray*}
where the second equality follows from~\eqref{eq:NLambda}. Using
Lemma~\ref{lem:derivatives}(a), as well as some additional
calculations, we obtain
\begin{displaymath}
  \begin{array}{@{}r@{\:\:}c@{\:\:}l}
    & & \hspace{-3pt} \grad_x L_c(x,\Lambda) = \\ 
    & & {} \grad_x L(x,\Lambda) - c 
    \grad G(x)^* Y_c(x,\Lambda) + 2 \grad_{xx}^2 L(x,\Lambda)
    \grad G(x)^* N(x) (\Lambda(x) - \Lambda) \\
    & & {} + \displaystyle{ 2 \left[ \left\langle 
          \frac{\partial^2 G(x)}{\partial x_i x_j},
          N(x) (\Lambda(x) - \Lambda) \right\rangle \right]_{i,j=1}^n
      \grad_x L(x,\Lambda)} \\
    & & {} - \displaystyle{ 2 \zeta_1^2 \left[ \left\langle 
          \frac{\partial G(x)}{\partial x_i} \circ 
          (G(x) \circ \Lambda) + G(x) \circ 
          \left( \frac{\partial G(x)}{\partial x_i} \circ \Lambda \right),
          N(x) (\Lambda(x) - \Lambda) \right\rangle \right]_{i=1}^n}\\
    & & {} - 2 \zeta_2^2 \inner{\Lambda}{N(x) (\Lambda(x) - 
      \Lambda)} \grad r(x).
  \end{array} 
\end{displaymath}
Moreover, the gradient of $L_c(x,\Lambda)$ with respect to $\Lambda$
can be written as follows:
\begin{equation}
  \label{eq:grad_L_Lc}
  \grad_\Lambda L_c(x,\Lambda) = Y_c(x,\Lambda) - 2 N(x)^2 
  (\Lambda(x) - \Lambda).
\end{equation}

Here, we point out that the formulas of $L_c$, $\grad_x L_c$ and
  $\grad_\Lambda L_c$, presented respectively in~\eqref{eq:exactaugL},
  \eqref{eq:grad_x_Lc} and~\eqref{eq:grad_L_Lc}, do not require
  explicit computation of the multiplier estimate $\Lambda(x)$. In
  fact, the estimate only appears in the expression $N(x) (\Lambda(x)
  - \Lambda)$, that can be written as~\eqref{eq:NLambda}. It means
  that both $L_c$ and their gradients do not require solving the
  linear least squares problem~\eqref{eq:estimate}, which is
  computational expensive.


\subsection{Exactness results}
\label{sec:exactness}

In the whole section, we suppose that Assumptions~\ref{hip:cq}
and~\ref{hip:nondegeneracy} hold. Indeed, it can be noted that the
assertion about constraint qualifications in Assumption~\ref{hip:cq}
holds automatically from Assumption~\ref{hip:nondegeneracy}. Here, we
will show that the particular augmented Lagrangian $L_c$, defined
in~\eqref{eq:exactaugL}, is in fact exact. With this purpose, we will
first establish the relation between the KKT points of the original
\eqref{eq:nsdp} problem and the stationary points of the unconstrained
problem:
\begin{equation}
  \label{eq:nlp}
  \begin{array}{ll}
    \underset{x,\Lambda}{\mbox{minimize}} & L_c(x,\Lambda) \\ 
    \mbox{subject to} & (x,\Lambda) \in \Re^n \times \S^m.
  \end{array}
\end{equation}

\begin{proposition}
  \label{prop:kkt_nsdp2nlp}
  Let $(x,\Lambda) \in \Re^n \times \S^m$ be a KKT pair
  of~\eqref{eq:nsdp}. Then, for all $c > 0$, $ L_c(x,\Lambda) = f(x)$
  and $(x,\Lambda)$ is stationary of~$L_c$, that is, $\grad_x
  L_c(x,\Lambda) = 0$ and $\grad_\Lambda L_c(x,\Lambda) = 0$.
\end{proposition}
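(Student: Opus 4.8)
The plan is to recognize that $L_c$ in~\eqref{eq:exactaugL} is nothing but the generalized augmented Lagrangian $\mathcal{A}_c$ of~\eqref{eq:A_c} under the particular choices recorded in~\eqref{eq:A_c_functions}, for which the text has already verified that Assumption~\ref{hip:general_func} holds. Granting this identification, the entire statement is an immediate instance of Proposition~\ref{prop:nsdp2A_c}. The one thing to check before invoking that result is the equality $L_c = \mathcal{A}_c$; this reduces to the positive homogeneity of the projection, namely $\PS\!\left(\frac{1}{c}\Lambda - G(x)\right) = \frac{1}{c}\PS(\Lambda - c\,G(x))$, whence $\frac{c}{2}\,\varphi\big(G(x),\frac{2}{c}\Lambda\big)$ reproduces exactly the first penalty term $\frac{1}{2c}\big(\norm{\PS(\Lambda - c\,G(x))}_F^2 - \norm{\Lambda}_F^2\big)$ of~\eqref{eq:exactaugL}, while the $\gamma$ term matches verbatim.

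Since the explicit gradients~\eqref{eq:grad_x_Lc} and~\eqref{eq:grad_L_Lc} are already at hand, a self-contained direct verification is equally clean, and I would present it as follows. The single fact that drives everything is Lemma~\ref{lem:estimate}(c): at a KKT pair $(x,\Lambda)$ the estimate satisfies $\Lambda(x) = \Lambda$, so $N(x)(\Lambda(x) - \Lambda) = 0$. This instantly annihilates the $\gamma$ term $\norm{N(x)(\Lambda(x) - \Lambda)}_F^2$ in~\eqref{eq:exactaugL}, as well as the last summand of both~\eqref{eq:grad_x_Lc} and~\eqref{eq:grad_L_Lc}. What remains is to handle the classical augmented Lagrangian part.

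For that part I would invoke complementarity. Since $G(x),\Lambda \in \S^m_+$ and $\Lambda \circ G(x) = 0$, applying Lemma~\ref{lem:properties2} with $Y = \Lambda$ and $Z = c\,G(x) \in \S^m_+$ yields $\PS(\Lambda - c\,G(x)) = \Lambda$. Substituting this into~\eqref{eq:exactaugL} collapses the penalty term to zero and gives $L_c(x,\Lambda) = f(x)$. The same substitution, together with positive homogeneity, shows $Y_c(x,\Lambda) = \frac{1}{c}\PS(\Lambda - c\,G(x)) - \frac{\Lambda}{c} = 0$, so~\eqref{eq:grad_L_Lc} gives $\grad_\Lambda L_c(x,\Lambda) = 0$; and the second expression in~\eqref{eq:grad_x_Lc} reduces to $\grad_x L(x,\Lambda) = \grad f(x) - \grad G(x)^*\Lambda$, which vanishes by the stationarity condition in~\eqref{eq:nsdp_kkt}.

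I do not anticipate a genuine obstacle, as this is the easy direction of exactness (it holds for every $c>0$, with no threshold). The only point requiring a little care is the bookkeeping around the positive homogeneity identity $\PS(tW) = t\,\PS(W)$ for $t>0$, which is what reconciles the two equivalent ways of writing the penalty term ($\frac{1}{c}$ scaled inside versus $\frac{1}{2c}$ outside) and what makes $Y_c$ vanish. Everything else is a direct consequence of $\Lambda(x)=\Lambda$ and the complementarity relation $\PS(\Lambda - c\,G(x)) = \Lambda$.
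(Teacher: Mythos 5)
Your first paragraph is exactly the paper's proof: the authors simply observe that with the choices in~\eqref{eq:A_c_functions} the function $L_c$ is an instance of $\mathcal{A}_c$ satisfying Assumption~\ref{hip:general_func}, and then cite Proposition~\ref{prop:nsdp2A_c}. Your additional direct verification (using $\Lambda(x)=\Lambda$ from Lemma~\ref{lem:estimate}(c) to kill the $\gamma$ terms, and $\PS(\Lambda - c\,G(x))=\Lambda$ from Lemma~\ref{lem:properties2} to collapse the penalty term and make $Y_c$ vanish) is also correct, but it is a bonus rather than a different route.
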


\begin{proof}
  Recalling that the functions defined in~\eqref{eq:A_c_functions}
  satisfy Assumption~\ref{hip:general_func}, the result follows from
  Proposition~\ref{prop:nsdp2A_c}.
\end{proof}

\begin{proposition}
  \label{prop:kkt_nlp2nsdp}
  Let $\hat{x} \in \Re^n$ be feasible for~\eqref{eq:nsdp} and
  $\hat{\Lambda} \in \S^m$. So, there exist $\hat{c}, \hat{\delta}_1,
  \hat{\delta}_2 > 0$ such that if $(x,\Lambda) \in \Re^n \times \S^m$
  is stationary of $L_c$ with $\norm{x-\hat{x}} \le \hat{\delta}_1$,
  $\norm{\Lambda-\hat{\Lambda}}_F \le \hat{\delta}_2$ and $c \ge
  \hat{c}$, then $(x,\Lambda)$ is a KKT pair of \eqref{eq:nsdp}.
\end{proposition}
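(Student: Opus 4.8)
The plan is to reduce everything to a single vector identity and then argue by contradiction. First, applying Lemma~\ref{lem:properties1}(a) to the definition~\eqref{eq:Y_c} one rewrites $Y_c(x,\Lambda)=\PS(G(x)-\Lambda/c)-G(x)$, so Lemma~\ref{lem:properties2} shows that $Y_c(x,\Lambda)=0$ if and only if $G(x)\in\S^m_+$, $\Lambda\in\S^m_+$ and $\Lambda\circ G(x)=0$. Hence a point with $\grad_x L(x,\Lambda)=0$ and $Y_c(x,\Lambda)=0$ is exactly a KKT pair. The decisive remark is that, at a stationary point of $L_c$, both equalities follow from the single condition $w:=N(x)(\Lambda(x)-\Lambda)=0$: if $w=0$ then $\Lambda=\Lambda(x)$ because $N(x)$ is nonsingular by Lemma~\ref{lem:estimate}(a), so~\eqref{eq:grad_L_Lc} reduces to $\grad_\Lambda L_c=Y_c$, giving $Y_c=0$, and then~\eqref{eq:grad_x_Lc} reduces to $\grad_x L_c=\grad_x L(x,\Lambda)$, giving $\grad_x L(x,\Lambda)=0$. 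Conversely, any KKT pair has $\Lambda=\Lambda(x)$ by Lemma~\ref{lem:estimate}(c), hence $w=0$. Thus, for stationary points, ``being a KKT pair'' is equivalent to ``$w=0$'', and the whole proposition reduces to proving $w=0$ for every stationary point sufficiently close to $(\hat x,\hat\Lambda)$ with $c$ large.

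Second, I would assume the conclusion fails and manufacture sequences. If no triple $(\hat c,\hat\delta_1,\hat\delta_2)$ works, then taking $\hat c=k$ and $\hat\delta_1=\hat\delta_2=1/k$ yields stationary points $(x_k,\Lambda_k)$ of $L_{c_k}$ with $c_k\ge k$, $x_k\to\hat x$, $\Lambda_k\to\hat\Lambda$, and $w_k:=N(x_k)(\Lambda(x_k)-\Lambda_k)\neq0$ for all $k$ (by the reduction, $w_k\neq0$ is precisely what ``not a KKT pair'' means for a stationary point). Working on a fixed closed ball around $(\hat x,\hat\Lambda)$, on which $N(\cdot),\Lambda(\cdot),K(\cdot,\cdot),\grad G(\cdot)$ are continuous and $N(\cdot)$ is uniformly positive definite, I would first show $w_k\to0$. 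Since $\hat x$ is feasible, $r(\hat x)=0$ and $\PS(-G(\hat x))=0$; combining the nonexpansiveness of $\PS$ with Lemma~\ref{lem:properties1}(a) gives $\norm{Y_{c_k}(x_k,\Lambda_k)}_F\le 2\norm{\Lambda_k}_F/c_k+\sqrt{2\,r(x_k)}\to0$. Feeding this into $Y_{c_k}=2N(x_k)^2(\Lambda(x_k)-\Lambda_k)$ from~\eqref{eq:grad_L_Lc} and using the uniform positive definiteness of $N$, I obtain $\Lambda(x_k)-\Lambda_k\to0$ and hence $w_k\to0$; passing to the limit also yields $\hat\Lambda=\Lambda(\hat x)$.

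Third comes the step I expect to be the real obstacle. The $x$-stationarity~\eqref{eq:grad_x_Lc}, together with $Y_{c_k}=2N(x_k)w_k$, reads $c_k\,\grad G(x_k)^*N(x_k)\,w_k=\tfrac12\grad_x L(x_k,\Lambda_k)+K(x_k,\Lambda_k)^*w_k$, whose right-hand side stays bounded while the left-hand side carries the diverging factor $c_k$. The plan is to normalize, putting $u_k:=w_k/\norm{w_k}_F$ (legitimate because $w_k\neq0$) and extracting a subsequence with $u_k\to u$, $\norm{u}_F=1$, and then to extract from the displayed relation together with the structural identity~\eqref{eq:NLambda} enough information about the limiting direction $u$ (presumably after distinguishing whether $\grad_x L(\hat x,\hat\Lambda)$ vanishes) to contradict $\norm{u}_F=1$. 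The hard part is converting the mere existence of such a unit $u$ into a genuine contradiction, and this is exactly where Assumption~\ref{hip:nondegeneracy} must enter: nondegeneracy at the feasible point $\hat x$ is what makes $N(\hat x)=\grad G(\hat x)\grad G(\hat x)^*+\zeta_1^2\L_{G(\hat x)}^2+\zeta_2^2 r(\hat x)I$ positive definite and, more importantly, forces a coercivity/compatibility relation involving $\mathrm{Im}\,\grad G(\hat x)$ and the complementarity directions that no nonzero $u$ can satisfy. Establishing this coercivity estimate, and doing so uniformly on a neighborhood so that a single triple $(\hat c,\hat\delta_1,\hat\delta_2)$ suffices, is the crux; once it gives $w_k=0$ for all large $k$, the contradiction with $w_k\neq0$ closes the argument.
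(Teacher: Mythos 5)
Your preliminary reductions are correct and match the endgame of the paper's own argument: at a stationary point of $L_c$, the condition $w:=N(x)(\Lambda(x)-\Lambda)=0$ is indeed equivalent (via the nonsingularity of $N(x)$ from Lemma~\ref{lem:estimate}(a), together with \eqref{eq:grad_L_Lc}, \eqref{eq:grad_x_Lc} and Lemma~\ref{lem:properties2}) to $(x,\Lambda)$ being a KKT pair, and your observation that $\Lambda$-stationarity plus feasibility of $\hat x$ forces $w_k\to 0$ is sound. But the proposal does not contain a proof: the step you yourself label ``the crux'' --- turning the existence of a unit limit direction $u$ of $w_k/\norm{w_k}_F$ into a contradiction --- is exactly the entire technical content of the proposition, and the route you sketch for it would not close. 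From $x$-stationarity you only extract $c_k\,\grad G(x_k)^*N(x_k)w_k = \tfrac12\grad_x L(x_k,\Lambda_k)+K(x_k,\Lambda_k)^*w_k$, which controls $\grad G(x_k)^*$ applied to $N(x_k)w_k$. Since $\grad G(x)^*\colon\S^m\to\Re^n$ typically has a huge kernel (its rank is at most $n\ll m(m+1)/2$), this says nothing about the components of $N(x_k)w_k$ (equivalently of $Y_{c_k}$) outside $\mathrm{Im}\,\grad G(x_k)$, and no amount of normalizing will recover them. Nondegeneracy does not act directly on this equation; it acts through the full operator $N(x)=\grad G(x)\grad G(x)^*+\zeta_1^2\L_{G(x)}^2+\zeta_2^2 r(x)I$, so you must bring the $\L_{G(x)}^2$ term into play before any coercivity is available.

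The missing idea, which is the heart of the paper's proof, is a Jordan-product identity. Writing $\hat Y_c(x,\Lambda):=Y_c(x,\Lambda)+G(x)=\PS(-\Lambda/c+G(x))$ and using Lemma~\ref{lem:properties1}(b), one shows
\begin{displaymath}
  \frac{1}{c}\,\L_{G(x)}^2(\Lambda) \;=\; -\,\L_{G(x)}\L_{\hat Y_c(x,\Lambda)+\Lambda/c}\big(Y_c(x,\Lambda)\big),
\end{displaymath}
i.e.\ the troublesome term $\L^2_{G(x)}(\Lambda)/c$ appearing in \eqref{eq:NLambda} is itself a linear operator applied to $Y_c(x,\Lambda)$. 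The paper then applies $\frac{1}{c}\grad G(x)$ (not $\grad G(x)^*$) to the equation $\grad_x L_c(x,\Lambda)=0$, which, combined with this identity and with \eqref{eq:prop:kkt_nlp2nsdp.3}, produces a relation of the form $\tilde N_c(x,\Lambda)Y_c(x,\Lambda)=\tfrac{1}{c}\zeta_2^2 r(x)\Lambda$, where $\tilde N_c$ converges (uniformly on a compact neighborhood $\mathcal V$ of $(\hat x,\hat\Lambda)$, by a compactness argument) to $M(x)=\grad G(x)\grad G(x)^*+\zeta_1^2\L_{G(x)}\L_{\PS(G(x))}$, with $M(\hat x)=N(\hat x)$ positive definite by Assumption~\ref{hip:nondegeneracy}. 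Combining this with the elementary bound $r(x)\le\tfrac12\norm{Y_c(x,\Lambda)}_F^2$ yields $\bigl[\tfrac12\sigma_{\min}^2(\tilde N_c(x,\Lambda))-\tfrac{1}{2c^2}\zeta_2^4 r(x)\norm{\Lambda}_F^2\bigr]\norm{Y_c(x,\Lambda)}_F^2\le 0$, whose bracket is strictly positive for $c$ large and $(x,\Lambda)\in\mathcal V$, forcing $Y_c(x,\Lambda)=0$. Without this identity (or some substitute that makes the $\zeta_1^2$ block of $N$ act on $Y_c$), your contradiction scheme has no mechanism to rule out nonzero limit directions $u$, so the argument as proposed has a genuine gap at its decisive step.
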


\begin{proof}
  Let us first consider an arbitrary pair $(x,\Lambda) \in \Re^n
  \times \S^m$ and $c > 0$. For convenience, we also define the
  following function:
  \begin{equation}
    \label{eq:tildeY_c}
    \hat{Y}_c(x,\Lambda) := Y_c(x,\Lambda) + G(x) = 
    \PS \left( -\frac{\Lambda}{c} + G(x) \right),
  \end{equation}
  with the last equality following from
  Lemma~\ref{lem:properties1}(a). Lemma~\ref{lem:properties1}(b) and
  the definition of~$Y_c(x,\Lambda)$ in~\eqref{eq:Y_c} show that
  \begin{displaymath}
    0 = \hat{Y}_c(x,\Lambda) \circ \PS \left( 
      \frac{\Lambda}{c} - G(x) \right) = \hat{Y}_c(x,\Lambda) \circ
    \left( Y_c(x,\Lambda) + \frac{\Lambda}{c} \right).
  \end{displaymath}
  The above expression can be rewritten using the distributivity of
  the Jordan product:
  \begin{displaymath}
    \hat{Y}_c(x,\Lambda) \circ Y_c(x,\Lambda) = - \hat{Y}_c(x,\Lambda)
    \circ \frac{\Lambda}{c}.
  \end{displaymath}
  Moreover, from~\eqref{eq:tildeY_c}, the above equality, the
  distributivity and the commutativity of the Jordan product, we
  obtain
  \begin{eqnarray*}
    G(x) \circ \Lambda
    & = & \hat{Y}_c(x,\Lambda) \circ \Lambda - Y_c(x,\Lambda) 
    \circ \Lambda \\
    & = & -c \hat{Y}_c(x,\Lambda) \circ Y_c(x,\Lambda)
    - Y_c(x,\Lambda) \circ \Lambda\\
    & = & - \big(c \hat{Y}_c(x,\Lambda) +  \Lambda \big) \circ Y_c(x,\Lambda).
  \end{eqnarray*}
  Using this expression, we have
  \begin{eqnarray}
    \frac{1}{c} \L_{G(x)}^2 (\Lambda) & = &
    - \frac{1}{c} G(x) \circ \Big[ \big( c 
    \hat{Y}_c(x,\Lambda) + \Lambda) \big) \circ Y_c(x,\Lambda) \Big] 
    \nonumber \\
    & = & - \L_{G(x)} \L_{\hat{Y}_c(x,\Lambda) + \Lambda /c} (Y_c(x,\Lambda)).
    \label{eq:prop:kkt_nlp2nsdp.1}
  \end{eqnarray}
  Now, the formula of $\grad_x L_c(x,\Lambda)$ in~\eqref{eq:grad_x_Lc} and 
  the equality \eqref{eq:NLambda} show that
  \begin{eqnarray*}
    \frac{1}{c} \grad G(x) \grad_x L_c(x,\Lambda) & = & 
    \frac{1}{c} \grad G(x) \grad_x L(x,\Lambda) - \grad G(x) 
    \grad G(x)^* Y_c(x,\Lambda)\\
    & & {} + \frac{2}{c} \grad G(x) K(x,\Lambda)^* N(x) 
    (\Lambda(x) - \Lambda)\\
    & = & \frac{1}{c} \big( I + 2 \grad G(x) K(x,\Lambda)^* \big) 
    N(x)(\Lambda(x) - \Lambda) \\ 
    & & {} + \frac{1}{c} \zeta_1^2 \L_{G(x)}^2 (\Lambda) + \frac{1}{c} 
    \zeta_2^2 r(x)\Lambda - \grad G(x) \grad G(x)^* Y_c(x,\Lambda).
  \end{eqnarray*}
  Thus, from~\eqref{eq:prop:kkt_nlp2nsdp.1}, we have
  \begin{eqnarray}
    \frac{1}{c} \grad G(x) \grad_x L_c(x,\Lambda) & = &
    \frac{1}{c} \big( I + 2 \grad G(x) K(x,\Lambda)^* \big) 
    N(x)(\Lambda(x) - \Lambda) \nonumber \\ 
    & & {} - N_c(x,\Lambda) Y_c(x,\Lambda) 
    + \frac{1}{c} \zeta_2^2 r(x)\Lambda, \label{eq:prop:kkt_nlp2nsdp.2}
  \end{eqnarray}
  where
  \begin{displaymath}
    N_c(x,\Lambda) := \grad G(x) \grad G(x)^* 
    + \zeta_1^2 \L_{G(x)} \L_{\hat{Y}_c(x,\Lambda) + \Lambda /c}.
  \end{displaymath}

  Let us now consider $(x,\Lambda) \in \Re^n \times \S^m$ that is
  stationary of $L_c$. Since $\grad_\Lambda L_{c}(x,\Lambda) = 0$, we
  obtain, from~\eqref{eq:grad_L_Lc},
  \begin{equation}
    \label{eq:prop:kkt_nlp2nsdp.3}
    \frac{1}{2} N(x)^{-1} Y_{c}(x,\Lambda) = \big( 
    N(x) (\Lambda(x) - \Lambda) \big),
  \end{equation}
  because $N(x)$ is nonsingular from Lemma~\ref{lem:estimate}(a).
  Recalling that $\grad_x L_{c}(x,\Lambda) = 0$ also holds,
  then, from \eqref{eq:prop:kkt_nlp2nsdp.2} we obtain
  \begin{equation}
    \label{eq:prop:kkt_nlp2nsdp.4}
    \tilde{N}_{c}(x,\Lambda) Y_{c}(x,\Lambda)
    - \frac{1}{c} \zeta_2^2 r(x) \Lambda = 0,
  \end{equation}
  where
  \begin{displaymath}
    \tilde{N}_{c}(x,\Lambda) := N_{c}(x,\Lambda) 
    - \frac{1}{2c} \big( I + 2 \grad G(x) K(x,\Lambda)^* \big)
    N(x)^{-1}.
  \end{displaymath}
  Using the fact that $\norm{W}_F^2/2 - \norm{Z}_F^2 \le
  \norm{W-Z}_F^2$ for any matrices $W,Z$,
  from~\eqref{eq:prop:kkt_nlp2nsdp.4}, we can write
  \begin{equation}
    \label{eq:prop:kkt_nlp2nsdp.5}
    \frac{1}{2} \norm{\tilde{N}_{c}(x,\Lambda)
    Y_{c}(x,\Lambda)}_F^2 - \frac{1}{c^2} \zeta_2^4 
    r(x)^2 \norm{\Lambda}_F^2 \le 0.
  \end{equation}
  Moreover, the definition of projection and
  Lemma~\ref{lem:properties1}(a) yield
  \begin{eqnarray*}
    r(x) & \le & \frac{1}{2} \left\| \PS \left(-\frac{\Lambda}{c} +
    G(x) \right) - G(x) \right\|_F^2 \\
    & = & \frac{1}{2} \left\| \PS \left(-\frac{\Lambda}{c} + G(x)
    \right) - \left( -\frac{\Lambda}{c} + G(x) \right) -
    \frac{\Lambda}{c} \right\|_F^2 \\
    & = & \frac{1}{2} \norm{Y_{c}(x,\Lambda)}_F^2.
  \end{eqnarray*}
  The above inequality, together with~\eqref{eq:prop:kkt_nlp2nsdp.5} implies
  \begin{equation}
    \label{eq:prop:kkt_nlp2nsdp.6}
    \left[ \frac{1}{2} \sigma_{\min}^2(\tilde{N}_{c}(x,\Lambda)) -
      \frac{1}{2c^2} \zeta_2^4 r(x) \norm{\Lambda}_F^2 \right]
    \norm{Y_{c}(x,\Lambda)}_F^2 \le 0,
  \end{equation}
  where $\sigma_{\min}(\cdot)$ denotes the smallest singular value
  function.

  Recalling that $\hat{x}$ is feasible for~\eqref{eq:nsdp}, we observe
  that if $c \to \infty$, then $\hat{Y}_c(\hat{x},\Lambda) \to
  \PS(G(\hat{x})) = G(\hat{x})$ for all $\Lambda$. Since $r(\hat{x}) =
  0$, this also shows that
  \begin{displaymath}
    c \to \infty \quad \implies \quad 
    N_c(\hat{x},\Lambda) \to N(\hat{x}) \quad \mbox{for all } 
    \Lambda \in \S^m,
  \end{displaymath}
  with $N(\hat{x})$ defined in~\eqref{eq:N(x)}. Now, note that from
  Lemma~\ref{lem:estimate}(a), $N(\hat{x})$ is positive definite.
  Also, define 
  \begin{displaymath}
    M(x) :=  \grad G(x) \grad G(x)^* + \zeta_1^2 \L_{G(x)} \L_{{\PS}(G(x))}.
  \end{displaymath}
  Observe that~$M$ is continuous because all functions involved in its
  formula are continuous, and that $M(\hat x) = N(\hat x)$, which is
  positive definite. Therefore, there is $\delta_1 > 0$ such that
  $\norm{x - \hat x} \leq \delta _1$ implies that $M(x)$ is also
  positive definite.

  Letting $\hat{\Lambda} \in \S^m$, there exist $c_0,\hat \delta _1,
  \hat{\delta}_2 > 0$ with $ \hat \delta _1 < \delta _1 $ such that
  both $M(x)$ and $N_{c_0} (x,\Lambda)$ are positive definite for all
  $(x,\Lambda)$ in the set
  \begin{displaymath}
    \mathcal{V} := \big\{ (x,\Lambda)
    \in \Re^n \times \S^m \colon \norm{x-\hat{x}} \le \hat{\delta}_1, 
    \norm{\Lambda-\hat{\Lambda}}_F \le \hat{\delta}_2 \big\}.
  \end{displaymath} 
  
  \noindent Now, we would like to prove that there is $\hat c_0 > 0$
  such that $N_{c}(x,\Lambda)$ is positive definite for all $c \geq
  \hat c_0$ and all $(x,\Lambda)$ in $\mathcal{V}$. To do so, suppose
  that this statement is false. Then, there are sequences $\{c_k\}
  \subset \Re_{++}$ and $\{(x^k,\Lambda_k)\} \in \mathcal{V}$ such
  that $c_k \to \infty$ and $N_{c_k}(x^k,\Lambda_k)$ is not positive
  definite for all $k$.  Since $\mathcal{V}$ is compact, we may assume
  that $\{(x^k,\Lambda_k)\}$ converges to some $(\tilde x, \tilde
  \Lambda) \in \mathcal{V}$.  However, we have
  \begin{displaymath}
    M(\tilde x) = \lim _{k \to \infty} N_{c_k}(x^k,\Lambda _k).
  \end{displaymath} 
  Since $M(\tilde x)$ is positive definite, $N_{c_k}(x^k,\Lambda _k)$
  should also be positive definite for~$k$ sufficiently large. This
  contradicts the fact that no $N_{c_k}(x^k,\Lambda _k)$ is positive
  definite, by construction. We conclude that there is $\hat c_0$ 
  such that $N_{c}(x,\Lambda)$ is
  positive definite for all $c \geq \hat c_0$ and all $(x,\Lambda)$ in
  $\mathcal{V}$.

  Considering one such $(x,\Lambda) \in \mathcal{V}$, we now seek some
  $c(x,\Lambda) \ge \hat c_0$ such that $\tilde{N}_c (x,\Lambda)$ is
  nonsingular for all $c \ge c(x,\Lambda)$. We remark that we already
  know that $\sigma _{\min}(N_c(x,\Lambda))$ is positive over
  $\mathcal{V}$. Denote by $\sigma _{\max}(\cdot)$ the maximum
  singular value function. Then, recalling the formula for
  $\tilde{N}_c (x,\Lambda)$ and elementary properties of singular
  values\footnote{Namely, that $\inf _{\norm{x} = 1} \norm{Ax - Bx}=
    \sigma _{\min }(A-B) \geq \sigma _{\min }(A) - \sigma _{\max
    }(B)$.}, we have that if
  \begin{displaymath}
  c(x,\Lambda) = \max\left(1 + \frac{\sigma _{\max}\big(\big( I + 2
      \grad G(x) K(x,\Lambda)^* \big)N(x)^{-1}\big)}{ 2 \sigma_{\min}
      (N_{c}(x,\Lambda))},\hat c_0 \right)
  \end{displaymath}
  then $\tilde{N}_c (x,\Lambda)$ will be nonsingular for all $c \geq
  c(x,\Lambda)$.  As $c(x,\Lambda)$ is a continuous function of
  $(x,\Lambda)$, we have that $c_1 := \sup_{\mathcal{V}} c(x,\Lambda)$
  is finite. Thus, $\tilde{N}_c (x,\Lambda)$ is positive definite, and
  hence $\sigma_{\min}(\tilde{N}_c (x,\Lambda)) > 0$ for all $c \ge
  c_1$ and $(x,\Lambda) \in \mathcal{V}$. Similarly, there exists
  $\hat{c} \ge c_1$ such that
  \begin{displaymath}
    \frac{1}{2} \sigma_{\min}(\tilde{N}_c (x,\Lambda)) -
    \frac{1}{2c^2} \zeta_2^4 r(x) \norm{\Lambda}_F^2 > 0
    \quad \mbox{for all } c \ge \hat{c},\quad (x,\Lambda) \in \mathcal{V}. 
  \end{displaymath}

  Finally, consider $(x,\Lambda) \in \Re^n \times \S^m$ and $c \in
  \Re_{++}$ such that $(x,\Lambda)$ is stationary of $L_c$,
  $\norm{x-\hat{x}} \le \hat{\delta}_1$,
  $\norm{\Lambda-\hat{\Lambda}}_F \le \hat{\delta}_2$ and $c \ge
  \hat{c}$. From~\eqref{eq:prop:kkt_nlp2nsdp.6} and the above
  inequality, it means that $Y_c (x,\Lambda) = 0$. Also, from
  Lemma~\ref{lem:properties2}, and the fact that $c > 0$, it yields
  \begin{displaymath}
    G(x) \in \S^m_+, \quad \Lambda \in \S^m_+ \quad \mbox{and} 
    \quad \Lambda \circ G(x) = 0. 
  \end{displaymath}
  Moreover, from~\eqref{eq:prop:kkt_nlp2nsdp.3} and the fact that
  $N(x)$ is nonsingular by Lemma~\ref{lem:estimate}(a), we have
  $\Lambda(x) = \Lambda$. Since $\grad_x L_c(x,\Lambda) = 0$ also
  holds, from~\eqref{eq:grad_x_Lc}, we obtain $\grad_x L(x,\Lambda) =
  0$. Therefore, $(x,\Lambda)$ is a KKT pair of~\eqref{eq:nsdp}.
\end{proof}

\begin{proposition}
  \label{prop:kkt_nlp2nsdp.2}
  Let $\{ x^k \} \subset \Re^n$, $\{ \Lambda_k \} \subset \S^m$, and
  $\{ c_k \} \subset \Re_{++}$ be sequences such that $c_k \to \infty$
  and $(x^k,\Lambda_k)$ is stationary of~$L_{c_k}$ for all $k$. Assume
  that there are subsequences $\{ x^{k_j} \}$ and $\{ \Lambda_{k_j}
  \}$ of $\{ x^k \}$ and $\{ \Lambda_k \}$, respectively, such that
  $x^{k_j} \to \hat{x}$ and $\Lambda_{k_j} \to \hat{\Lambda}$ for some
  $(\hat{x},\hat{\Lambda}) \in \Re^n \times \S^m$. Then, either there
  exists $\hat{k} > 0$ such that $(x^{k_j},\Lambda_{k_j})$ is a KKT
  pair of~\eqref{eq:nsdp} for all $k_j \ge \hat{k}$, or $\hat{x}$ is a
  stationary point of the residual function~$r$ that is infeasible
  for~\eqref{eq:nsdp}.
\end{proposition}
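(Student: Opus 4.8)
The plan is to split the argument according to whether the accumulation point $\hat{x}$ is feasible for~\eqref{eq:nsdp}; the two cases will produce, respectively, the first and the second alternatives. Suppose first that $\hat{x}$ is feasible. Then I would invoke Proposition~\ref{prop:kkt_nlp2nsdp} at the feasible point $\hat{x}$ together with the matrix $\hat{\Lambda}$, obtaining thresholds $\hat{c},\hat{\delta}_1,\hat{\delta}_2>0$. Since $x^{k_j}\to\hat{x}$, $\Lambda_{k_j}\to\hat{\Lambda}$ and $c_{k_j}\to\infty$, there is an index $\hat{k}$ beyond which $\norm{x^{k_j}-\hat{x}}\le\hat{\delta}_1$, $\norm{\Lambda_{k_j}-\hat{\Lambda}}_F\le\hat{\delta}_2$ and $c_{k_j}\ge\hat{c}$ hold simultaneously. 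As each $(x^{k_j},\Lambda_{k_j})$ is stationary of $L_{c_{k_j}}$, Proposition~\ref{prop:kkt_nlp2nsdp} then yields that $(x^{k_j},\Lambda_{k_j})$ is a KKT pair of~\eqref{eq:nsdp} for all $k_j\ge\hat{k}$, which is exactly the first alternative.

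For the other case, assume $\hat{x}$ is infeasible; I would then establish $\grad r(\hat{x})=0$, giving the second alternative. The point of departure is the identity~\eqref{eq:prop:kkt_nlp2nsdp.2}, which holds for every argument and does not use stationarity. Evaluating it along the subsequence and using only $\grad_x L_{c_{k_j}}(x^{k_j},\Lambda_{k_j})=0$, so that its left-hand side vanishes, gives
\begin{displaymath}
  N_{c_{k_j}}(x^{k_j},\Lambda_{k_j})\,Y_{c_{k_j}}(x^{k_j},\Lambda_{k_j})
  = \frac{1}{c_{k_j}}\big(I+2\grad G(x^{k_j})K(x^{k_j},\Lambda_{k_j})^*\big)
  N(x^{k_j})\big(\Lambda(x^{k_j})-\Lambda_{k_j}\big)
  + \frac{1}{c_{k_j}}\zeta_2^2\,r(x^{k_j})\Lambda_{k_j}.
\end{displaymath}
I would then let $j\to\infty$. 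Because $\Lambda_{k_j}$ is bounded and $c_{k_j}\to\infty$, we have $\Lambda_{k_j}/c_{k_j}\to 0$, so both $1/c_{k_j}$ terms on the right vanish in the limit; indeed, every factor appearing there ($\grad G$, $K$, $N$, $\Lambda(\cdot)$, $r$) is continuous and hence bounded along the convergent subsequence, using Lemma~\ref{lem:estimate}(a),(d). On the left, $Y_{c_{k_j}}\to\PS(-G(\hat{x}))$ and $\hat{Y}_{c_{k_j}}\to\PS(G(\hat{x}))$, whence $N_{c_{k_j}}\to M(\hat{x})$, where $M(x):=\grad G(x)\grad G(x)^*+\zeta_1^2\L_{G(x)}\L_{\PS(G(x))}$ is precisely the operator used in the proof of Proposition~\ref{prop:kkt_nlp2nsdp}. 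The limit therefore reads $M(\hat{x})\PS(-G(\hat{x}))=0$.

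To conclude, I would expand this last equation. Its Jordan-product part, $\L_{G(\hat{x})}\L_{\PS(G(\hat{x}))}\PS(-G(\hat{x}))=G(\hat{x})\circ\big(\PS(G(\hat{x}))\circ\PS(-G(\hat{x}))\big)$, vanishes by Lemma~\ref{lem:properties1}(b), leaving $\grad G(\hat{x})\grad G(\hat{x})^*\PS(-G(\hat{x}))=0$. Setting $v:=\grad G(\hat{x})^*\PS(-G(\hat{x}))\in\Re^n$, this says $\grad G(\hat{x})v=0$, and hence $\norm{v}^2=\inner{v}{\grad G(\hat{x})^*\PS(-G(\hat{x}))}=\inner{\grad G(\hat{x})v}{\PS(-G(\hat{x}))}=0$, so $v=0$. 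Since $\grad r(\hat{x})=-\grad G(\hat{x})^*\PS(-G(\hat{x}))=-v$ by Lemma~\ref{lem:derivatives}(a), we obtain $\grad r(\hat{x})=0$, i.e.\ $\hat{x}$ is an infeasible stationary point of $r$. I expect the main obstacle to be the limit computation in the second paragraph, specifically verifying $N_{c_{k_j}}\to M(\hat{x})$ through the convergence $\hat{Y}_{c_{k_j}}\to\PS(G(\hat{x}))$ and confirming that the residual term is genuinely negligible, together with the cancellation via Lemma~\ref{lem:properties1}(b) that collapses the operator equation to the single identity defining $\grad r(\hat{x})$.
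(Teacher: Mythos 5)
Your proof is correct, and its overall architecture matches the paper's: dichotomy on the feasibility of $\hat{x}$, with the feasible case dispatched by Proposition~\ref{prop:kkt_nlp2nsdp} exactly as in the paper, and the infeasible case handled by passing to the limit in the stationarity condition scaled by $1/c_{k_j}$. The one place you diverge is \emph{which} scaled equation you take the limit of. The paper divides the raw identity $\grad_x L_{c}=0$ from~\eqref{eq:grad_x_Lc} by $c_{k_j}$, so that the only surviving term in the limit is $-\grad G(\hat{x})^*\PS(-G(\hat{x}))=\grad r(\hat{x})$, and stationarity of $r$ at $\hat{x}$ drops out immediately (note the paper establishes $\grad r(\hat x)=0$ unconditionally and only then splits on feasibility, whereas you split first; this is immaterial). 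You instead start from~\eqref{eq:prop:kkt_nlp2nsdp.2}, which has $\grad G(x)$ pre-composed on the left, so your limit is the operator equation $M(\hat{x})\PS(-G(\hat{x}))=0$; you then need two extra steps to unwind it — killing the Jordan term via $\PS(Z)\circ\PS(-Z)=0$ (Lemma~\ref{lem:properties1}(b)) and a kernel argument to pass from $\grad G(\hat x)\grad G(\hat x)^*v'=0$ to $\grad G(\hat x)^*v'=0$. Both of those steps are sound as you wrote them, so the argument goes through; it is simply a slightly longer road to the same destination, and working directly with $\grad_x L_{c}=0$ rather than its image under $\grad G(x)$ avoids the detour entirely.
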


\begin{proof}
  We first show that $\hat{x}$ is a stationary point of~$r$, in other
  words,
  \begin{displaymath}
    \grad r(\hat{x}) = - \grad G(\hat{x})^* \PS(-G(\hat{x})) = 0.
  \end{displaymath}
  In fact, using~\eqref{eq:grad_x_Lc} and dividing the equation
  $\grad_x L_{c_{k_j}} (x^{k_j},\Lambda_{k_j}) = 0$ by $c_{k_j}$, we have
  \begin{eqnarray*}
    \frac{1}{c_{k_j}} \grad_x L(x^{k_j},\Lambda_{k_j}) - \grad G(x^{k_j})^* 
    Y_{c_{k_j}}(x^{k_j},\Lambda_{k_j}) + \\
    \frac{2}{c_{k_j}} K(x^{k_j},\Lambda_{k_j})^* N(x^{k_j}) 
    (\Lambda(x^{k_j}) - \Lambda_{k_j}) = 0.
  \end{eqnarray*}
  Recalling Lemma~\ref{lem:estimate}, we observe that all the
  functions involved in the above equation are continuous. Thus,
  taking the limit $k_j \to \infty$, from the definition of $Y_c$
  in~\eqref{eq:Y_c}, we obtain $-\grad G(\hat{x})^* \PS(-G(\hat{x})) =
  0$, as we claimed. Now, assume that $\hat{x}$ is feasible. Then,
  from Proposition~\ref{prop:kkt_nlp2nsdp}, there exists $\hat{k} > 0$
  such that $(x^{k_j},\Lambda_{k_j})$ is a KKT pair of~\eqref{eq:nsdp}
  for all $k_j \ge \hat{k}$, which completes the proof.
\end{proof}

Now, recalling Definition~\ref{def:exactaug}, once again, we use the
notations $G_{\mbox{\tiny{NSDP}}}$ ($L_{\mbox{\tiny{NSDP}}}$) and
$G_{\mbox{\tiny{NLP}}}(c)$ ($L_{\mbox{\tiny{NLP}}}(c)$) to denote the sets
of global (local) minimizers of problems~\eqref{eq:nsdp}
and~\eqref{eq:nlp}, respectively. The following theorems show that the
proposed function $L_c$ given in~\eqref{eq:exactaugL} is in fact an
exact augmented Lagrangian function. However, the results are
established as in~\cite{AFS13}, where it is admitted that we can end
up with a stationary point of the residual function~$r$ that is
infeasible for~\eqref{eq:nsdp}.

\begin{theorem}
  Let $\{ x^k \} \subset \Re^n$, $\{ \Lambda_k \} \subset \S^m$, and
  $\{ c_k \} \subset \Re_{++}$ be sequences such that $c_k \to \infty$
  and $(x^k,\Lambda_k) \in L_{\mbox{\tiny{NLP}}}(c_k)$ for all~$k$.
  Assume that there are subsequences $\{ x^{k_j} \}$ and $\{
  \Lambda_{k_j} \}$ of $\{ x^k \}$ and $\{ \Lambda_k \}$,
  respectively, such that $x^{k_j} \to \hat{x}$ and $\Lambda_{k_j} \to
  \hat{\Lambda}$ for some $(\hat{x},\hat{\Lambda}) \in \Re^n \times
  \S^m$. Then, either there exists $\hat{k} > 0$ such that $x^{k_j}
  \in L_{\mbox{\tiny{NSDP}}}$, with an associated Lagrange multiplier
  $\Lambda_{k_j}$ for all $k_j \ge \hat{k}$, or $\hat{x}$ is a
  stationary point of the residual function~$r$ that is infeasible
  for~\eqref{eq:nsdp}.
\end{theorem}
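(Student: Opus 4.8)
The plan is to reduce the statement to the stationary-point analysis already carried out in Proposition~\ref{prop:kkt_nlp2nsdp.2}, and then to upgrade the conclusion "KKT pair" to "local minimizer of~\eqref{eq:nsdp}" by a localized version of the argument in Theorem~\ref{the:global_local}(b). First I would observe that each $(x^k,\Lambda_k) \in L_{\mbox{\tiny{NLP}}}(c_k)$ is an unconstrained local minimizer of the continuously differentiable function $L_{c_k}$ (differentiability being clear from the explicit gradient formulas~\eqref{eq:grad_x_Lc} and~\eqref{eq:grad_L_Lc}), so the first-order necessary condition forces $(x^k,\Lambda_k)$ to be stationary of $L_{c_k}$ for every $k$. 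The hypotheses of Proposition~\ref{prop:kkt_nlp2nsdp.2} are then met verbatim, and that proposition delivers the dichotomy: either $\hat{x}$ is a stationary point of $r$ that is infeasible for~\eqref{eq:nsdp}, in which case the second alternative of the theorem holds and we are done, or there exists $\hat{k} > 0$ such that $(x^{k_j},\Lambda_{k_j})$ is a KKT pair of~\eqref{eq:nsdp} for all $k_j \ge \hat{k}$.

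In the latter case I would fix an index $k_j \ge \hat{k}$ and show that $x^{k_j} \in L_{\mbox{\tiny{NSDP}}}$ with $\Lambda_{k_j}$ as a corresponding multiplier; the multiplier assertion is automatic, since $(x^{k_j},\Lambda_{k_j})$ is a KKT pair. Being a KKT pair, Proposition~\ref{prop:kkt_nsdp2nlp} gives $L_{c_{k_j}}(x^{k_j},\Lambda_{k_j}) = f(x^{k_j})$. Because $(x^{k_j},\Lambda_{k_j})$ is a local minimizer of $L_{c_{k_j}}$, there are neighborhoods $V_{x^{k_j}}$ and $V_{\Lambda_{k_j}}$ on which $L_{c_{k_j}}(x^{k_j},\Lambda_{k_j}) \le L_{c_{k_j}}(x,\Lambda)$; shrinking them so that the estimate map $\Lambda(\cdot)$ of Assumption~\ref{hip:general_func}(e) (here $\Gamma = \Lambda(\cdot)$, recall~\eqref{eq:A_c_functions}) carries $V_{x^{k_j}}$ into $V_{\Lambda_{k_j}}$, I would substitute $\Lambda = \Lambda(x)$ to get $f(x^{k_j}) \le L_{c_{k_j}}(x,\Lambda(x))$ for all $x \in V_{x^{k_j}}$. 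Finally, for feasible $x$ in this neighborhood, Proposition~\ref{prop:A_c_feasible} together with $\gamma(x,\Lambda(x)) = 0$ yields $L_{c_{k_j}}(x,\Lambda(x)) \le f(x) + \gamma(x,\Lambda(x)) = f(x)$, so $f(x^{k_j}) \le f(x)$ for all feasible $x$ near $x^{k_j}$, i.e.\ $x^{k_j} \in L_{\mbox{\tiny{NSDP}}}$.

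The substantive content is already packaged in Proposition~\ref{prop:kkt_nlp2nsdp.2}---in particular the threshold $\hat{k}$ and the delicate positive-definiteness argument for $N_c$ and $\tilde{N}_c$ over a compact neighborhood of $(\hat{x},\hat{\Lambda})$---so for the present theorem that dichotomy may be taken as given. With it in hand, the only step demanding care is the localization of the second paragraph: one must check that the comparison inequality of Theorem~\ref{the:global_local}(b) is genuinely local, comparing $x^{k_j}$ only with feasible points in a fixed neighborhood, so that no global minimizer information is invoked and the conclusion is exactly membership in $L_{\mbox{\tiny{NSDP}}}$. Since the conclusion is asserted separately for each index $k_j \ge \hat{k}$, each $x^{k_j}$ may use its own neighborhood and no uniformity across the subsequence is required; I therefore expect no genuine analytic obstacle here beyond re-invoking the already-established propositions, the real difficulty having been absorbed into Proposition~\ref{prop:kkt_nlp2nsdp.2}.
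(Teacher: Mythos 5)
Your proposal is correct and follows the paper's own route exactly: the paper likewise first applies Proposition~\ref{prop:kkt_nlp2nsdp.2} to obtain the dichotomy and then, in the KKT case, concludes by repeating the local argument of Theorem~\ref{the:global_local}(b) (which you have simply written out explicitly, with $\Gamma = \Lambda(\cdot)$ and $\gamma(x,\Lambda(x)) = 0$). No gaps.
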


\begin{proof}
  From Proposition~\ref{prop:kkt_nlp2nsdp.2}, either there exists
  $\hat{k} > 0$ such that $(x^{k_j},\Lambda_{k_j})$ is a KKT pair
  of~\eqref{eq:nsdp} for all $k_j \ge \hat{k}$, or $\hat{x}$ is a
  stationary point of~$r$ that is infeasible for~\eqref{eq:nsdp}. So,
  the result follows in the same way as the proof of
  Theorem~\ref{the:global_local}(b).
\end{proof}

For the above result, that concerns local minimizers, we note that the
existence of the subsequence $\{ x^{k_j} \}$ is guaranteed, for
example, when the whole sequence $\{ x^k \}$ is bounded. Moreover, if
the constraint function~$G$ is convex with respect to the cone
$\S^m_+$, then the residual function~$r$ is also convex, which means
that all stationary points of~$r$ are feasible for~\eqref{eq:nsdp}. In
the case of global minimizers, it is possible to prove full
equivalence, and we do not have to concern about stationary points
of~$r$ that are infeasible for~\eqref{eq:nsdp}.

\begin{proposition}
  \label{prop:global_L_c}
  Let $\{ x^k \} \subset \Re^n$, $\{ \Lambda_k \} \subset \S^m$, and
  $\{ c_k \} \subset \Re_{++}$ be sequences such that $x^k \to
  \hat{x}$ and $\Lambda_k \to \hat{\Lambda}$ for some
  $(\hat{x},\hat{\Lambda}) \in \Re^n \times \S^m$, $c_k \to \infty$,
  and $(x^k,\Lambda_k) \in G_{\mbox{\tiny{NLP}}}(c_k)$ for
  all~$k$. Then, there exists $\hat{k} > 0$ such that $x^k \in
  G_{\mbox{\tiny{NSDP}}}$ with an associated Lagrange multiplier
  $\Lambda_k$ for all $k \ge \hat{k}$.
\end{proposition}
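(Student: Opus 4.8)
The plan is to follow the same two-move strategy as Theorem~\ref{the:global_local}(a) --- first land at a KKT pair via Proposition~\ref{prop:kkt_nlp2nsdp.2}, then upgrade ``KKT'' to ``global minimizer'' by a comparison with the optimal value --- but with one extra preliminary move that is genuinely special to the global case: forcing the limit $\hat{x}$ to be feasible, so that the infeasible-stationary-point alternative in Proposition~\ref{prop:kkt_nlp2nsdp.2} can be discarded. First I would record a convenient rewriting of $L_c$ in terms of the auxiliary map $Y_c$ of~\eqref{eq:Y_c}. Using $\PS(\Lambda - c\,G(x)) = c\,Y_c(x,\Lambda) + \Lambda$ (by positive homogeneity of $\PS$) and expanding the square, the penalty term of~\eqref{eq:exactaugL} becomes $\frac{c}{2}\norm{Y_c(x,\Lambda)}_F^2 + \inner{Y_c(x,\Lambda)}{\Lambda}$, so that
\begin{displaymath}
  L_c(x,\Lambda) = f(x) + \frac{c}{2}\norm{Y_c(x,\Lambda)}_F^2 + \inner{Y_c(x,\Lambda)}{\Lambda} + \norm{N(x)(\Lambda(x)-\Lambda)}_F^2.
\end{displaymath}
Since the residual bound $r(x) \le \frac{1}{2}\norm{Y_c(x,\Lambda)}_F^2$ already appears in the proof of Proposition~\ref{prop:kkt_nlp2nsdp}, discarding the nonnegative term $\norm{N(x)(\Lambda(x)-\Lambda)}_F^2$ gives the lower estimate $L_c(x,\Lambda) \ge f(x) + c\,r(x) + \inner{Y_c(x,\Lambda)}{\Lambda}$.

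Next I would use global optimality to produce a matching upper bound. By Assumption~\ref{hip:cq} there is some $x^* \in G_{\mbox{\tiny{NSDP}}}$ with associated multiplier $\Lambda^*$, and Proposition~\ref{prop:kkt_nsdp2nlp} gives $L_{c_k}(x^*,\Lambda^*) = f(x^*)$ for every $k$. Since $(x^k,\Lambda_k) \in G_{\mbox{\tiny{NLP}}}(c_k)$, we have $L_{c_k}(x^k,\Lambda_k) \le f(x^*)$, and combining with the lower estimate yields
\begin{displaymath}
  c_k\, r(x^k) \le f(x^*) - f(x^k) - \inner{Y_{c_k}(x^k,\Lambda_k)}{\Lambda_k}.
\end{displaymath}
As $k \to \infty$ the right-hand side stays bounded: $f(x^k) \to f(\hat{x})$, and since $\Lambda_k/c_k \to 0$ we have $Y_{c_k}(x^k,\Lambda_k) \to \PS(-G(\hat{x}))$, so the inner product converges to $\inner{\PS(-G(\hat{x}))}{\hat{\Lambda}}$. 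Because $c_k \to \infty$ and $r \ge 0$, boundedness of $c_k\, r(x^k)$ forces $r(x^k) \to 0$, hence $r(\hat{x}) = 0$ by continuity, i.e.\ $\hat{x}$ is feasible. I expect this feasibility step to be the crux, and the only place where the global hypothesis does real work: the comparison value $f(x^*)$ is exactly what is unavailable in the local setting, which is why the earlier local theorem could not rule out infeasible stationary points of $r$.

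With $\hat{x}$ feasible, the remainder is bookkeeping. Global minimizers are in particular stationary points of the continuously differentiable $L_{c_k}$, so Proposition~\ref{prop:kkt_nlp2nsdp.2} applies to the (fully convergent) sequence; the infeasible-stationary-point-of-$r$ alternative is excluded by feasibility of $\hat{x}$, yielding $\hat{k} > 0$ such that $(x^k,\Lambda_k)$ is a KKT pair of~\eqref{eq:nsdp} for all $k \ge \hat{k}$. Finally I would upgrade ``KKT'' to ``global minimizer'' exactly as in Theorem~\ref{the:global_local}(a): for $k \ge \hat{k}$, Proposition~\ref{prop:kkt_nsdp2nlp} gives $L_{c_k}(x^k,\Lambda_k) = f(x^k)$, so
\begin{displaymath}
  f(x^k) = L_{c_k}(x^k,\Lambda_k) \le L_{c_k}(x^*,\Lambda^*) = f(x^*),
\end{displaymath}
while feasibility of $x^k$ and global optimality of $x^*$ give $f(x^*) \le f(x^k)$. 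Hence $f(x^k) = f(x^*)$, so $x^k \in G_{\mbox{\tiny{NSDP}}}$, and $\Lambda_k$ is a corresponding Lagrange multiplier because $(x^k,\Lambda_k)$ is a KKT pair. This establishes the claim for all $k \ge \hat{k}$.
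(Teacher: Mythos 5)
Your proof is correct and follows essentially the same route as the paper's: compare $L_{c_k}(x^k,\Lambda_k)$ with the value $f(x^*)$ of a KKT pair via Proposition~\ref{prop:kkt_nsdp2nlp}, use the blow-up of the penalty term to force feasibility of $\hat{x}$, invoke Proposition~\ref{prop:kkt_nlp2nsdp.2} to get KKT pairs for large $k$, and finish with the value comparison. Your version is in fact slightly more streamlined (the explicit lower bound $L_c \ge f + c\,r + \inner{Y_c}{\Lambda}$ replaces the paper's limsup argument, and you skip the redundant re-application of Proposition~\ref{prop:kkt_nlp2nsdp} in the paper's final paragraph), but the underlying ideas are identical.
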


\begin{proof}
  Let $\bar{x} \in G_{\mbox{\tiny{NSDP}}}$, which exists by
  Assumption~\ref{hip:cq}. Because of the nondegeneracy constraint
  qualification, there exists $\bar{\Lambda} \in \S^m$ such that
  $(\bar{x},\bar{\Lambda})$ is a KKT pair of~\eqref{eq:nsdp}. So, from
  Proposition~\ref{prop:kkt_nsdp2nlp}, $f(\bar{x}) =
  L_c(\bar{x},\bar{\Lambda})$ holds for all $c > 0$. Moreover, since
  $(x^k,\Lambda_k) \in G_{\mbox{\tiny{NLP}}}(c_k)$, we have
  \begin{equation}  
    \label{eq:prop:global_L_c.1}
    L_{c_k}(x^k,\Lambda_k) \le L_{c_k}(\bar{x},\bar{\Lambda}) =
    f(\bar{x})
  \end{equation}
  for all~$k$. Taking the supremum limit in this inequality, we
  obtain
  \begin{equation}
    \label{eq:limsup}
    \limsup_{k \to \infty} L_{c_k}(x^k,\Lambda_k) \le f(\bar{x}).
  \end{equation}
  Observe now that the formula of $L_{c_k}(x^k,\Lambda_k)$
  in~\eqref{eq:exactaugL} can be written equivalently as
  \begin{eqnarray}
    L_{c_k}(x^k,\Lambda_k) & = & f(x^k) + \frac{c_k}{2} \left( \left\| 
        \PS \left( \frac{\Lambda_k}{c_k} - G(x^k) \right) \right\|^2_F 
      - \left\| \frac{\Lambda_k}{c_k} \right\|^2_F \right) \nonumber \\
    & & {} + \norm{N(x^k)(\Lambda(x^k)-\Lambda_k)}^2_F \label{eq:againL_c}.
  \end{eqnarray}
  Recall from Lemma~\ref{lem:estimate} that the functions involved in
  the above equality are all continuous. This fact, together with
  inequality~\eqref{eq:limsup}, shows that $\PS(-G(\hat{x})) = 0$,
  that is, $\hat{x}$ is feasible. So, from
  Proposition~\ref{prop:kkt_nlp2nsdp.2}, we conclude that there exists
  $\hat{k} > 0$ such that $(x^k,\Lambda_k)$ is a KKT pair
  of~\eqref{eq:nsdp} for all $k \ge \hat{k}$. Since $c_k > 0$ and the
  norm is always nonnegative, \eqref{eq:againL_c} implies
  $L_{c_k}(x^k,\Lambda_k) \ge f(x^k) -
  \norm{\Lambda_k}_F^2/(2c_k)$. Again, taking the supremum limit in
  such an inequality, we have
  \begin{displaymath}
    \limsup_{k \to \infty} L_{c_k}(x^k,\Lambda_k) \ge f(\hat{x}), 
  \end{displaymath}
  which, together with~\eqref{eq:limsup} shows that $f(\hat{x}) \le
  f(\bar{x})$. Thus, $\hat{x} \in G_{\mbox{\tiny{NSDP}}}$ holds.

  Now, since $\hat{x}$ is feasible for~\eqref{eq:nsdp}, there exist
  $\hat{c},\hat{\delta}_1,\hat{\delta}_2$ as in
  Proposition~\ref{prop:kkt_nlp2nsdp}.  Consider~$\hat{k}$ large
  enough so that $\norm{x^k - \hat{x}} \le \hat{\delta}_1$,
  $\norm{\Lambda_k -\hat{\Lambda}}_F \le \hat{\delta}_2$, $c_k \ge
  \hat{c}$, and $(x^k,\Lambda_k) \in G_{\mbox{\tiny{NLP}}}(c_k)$ for
  all $k \ge \hat{k}$. Since $(x^k,\Lambda_k)$ is stationary of
  $L_{c_k}$, from Proposition~\ref{prop:kkt_nlp2nsdp}, we obtain that
  $(x^k,\Lambda_k)$ is also a KKT pair of~\eqref{eq:nsdp} for all $k
  \ge \hat{k}$. Once again from Proposition~\ref{prop:kkt_nsdp2nlp}
  and~\eqref{eq:prop:global_L_c.1}, we have $f(x^k) =
  L_{c_k}(x^k,\Lambda_k) \le f(\bar{x})$. Therefore, $x^k \in
  G_{\mbox{\tiny{NSDP}}}$ for all $k \ge \hat{k}$.
\end{proof}

\begin{theorem}
 \label{the:global_final}
  Assume that there exists $\bar{c} > 0$ such that $\bigcup_{c \ge
    \bar{c}} G_{\mbox{\tiny{NLP}}}(c)$ is bound\-ed. Then, there exists
  $\hat{c} > 0$ such that $G_{\mbox{\tiny{NLP}}}(c) =
  \tilde{G}_{\mbox{\tiny{NSDP}}}$ for all $c \ge \hat{c}$, where
  $\tilde{G}_{\mbox{\tiny{NSDP}}}$ is defined
  in~\eqref{eq:tilde_G_NSDP}.
\end{theorem}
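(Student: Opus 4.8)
The plan is to reduce the whole statement to the global‑convergence result already recorded in Proposition~\ref{prop:global_L_c}, and then to strip away its sequential formulation by a compactness‑plus‑contradiction argument that feeds on the boundedness hypothesis. First I would invoke Lemma~\ref{lem:global_subset}: since that lemma guarantees that the inclusion $G_{\mbox{\tiny{NLP}}}(c) \subseteq \tilde{G}_{\mbox{\tiny{NSDP}}}$ already forces the equality $G_{\mbox{\tiny{NLP}}}(c) = \tilde{G}_{\mbox{\tiny{NSDP}}}$ for each fixed $c$, it suffices to produce a single threshold $\hat{c} > 0$ such that the inclusion $G_{\mbox{\tiny{NLP}}}(c) \subseteq \tilde{G}_{\mbox{\tiny{NSDP}}}$ holds for every $c \ge \hat{c}$. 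Thus the theorem collapses to establishing this one inclusion uniformly in the penalty parameter.

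To obtain such a $\hat{c}$, I would argue by contradiction. If no threshold worked, then for every $k$ I could choose a penalty parameter $c_k \ge \max(\bar{c},k)$ together with a pair $(x^k,\Lambda_k) \in G_{\mbox{\tiny{NLP}}}(c_k)$ that fails to lie in $\tilde{G}_{\mbox{\tiny{NSDP}}}$. By construction $c_k \to \infty$ and, crucially, $c_k \ge \bar{c}$, so every $(x^k,\Lambda_k)$ belongs to $\bigcup_{c \ge \bar{c}} G_{\mbox{\tiny{NLP}}}(c)$, which is bounded by hypothesis. Compactness of a closed bounded set in $\Re^n \times \S^m$ then yields a subsequence $(x^{k_j},\Lambda_{k_j})$ converging to some $(\hat{x},\hat{\Lambda}) \in \Re^n \times \S^m$.

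The decisive step is to apply Proposition~\ref{prop:global_L_c} to this convergent subsequence, regarded as a sequence in its own right: it still satisfies $c_{k_j} \to \infty$ and $(x^{k_j},\Lambda_{k_j}) \in G_{\mbox{\tiny{NLP}}}(c_{k_j})$, so the proposition supplies an index $\hat{k}$ beyond which $x^{k_j} \in G_{\mbox{\tiny{NSDP}}}$ with $\Lambda_{k_j}$ an associated Lagrange multiplier, that is, $(x^{k_j},\Lambda_{k_j}) \in \tilde{G}_{\mbox{\tiny{NSDP}}}$. This directly contradicts the choice of the $(x^k,\Lambda_k)$ as points \emph{outside} $\tilde{G}_{\mbox{\tiny{NSDP}}}$, and the contradiction delivers the required $\hat{c}$. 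Combining this uniform inclusion with Lemma~\ref{lem:global_subset} then upgrades it to the equality $G_{\mbox{\tiny{NLP}}}(c) = \tilde{G}_{\mbox{\tiny{NSDP}}}$ for all $c \ge \hat{c}$, finishing the proof.

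I expect the only genuinely delicate point to be the bookkeeping in passing from the negated statement to the sequences and then from the bounded sequence to a convergent subsequence before quoting Proposition~\ref{prop:global_L_c}; one must verify that the extracted subsequence is an admissible input to that proposition (whole‑sequence convergence to $(\hat{x},\hat{\Lambda})$, $c_{k_j} \to \infty$, and preservation of global minimality). All the analytic content — feasibility of the limit $\hat{x}$, the KKT characterization, and the matching of objective values $f(x^k) = L_{c_k}(x^k,\Lambda_k)$ — is already encapsulated in Proposition~\ref{prop:global_L_c}, so no new estimates are needed here and the remaining work is purely logical.
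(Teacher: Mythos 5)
Your proposal is correct and follows essentially the same route as the paper: reduce to the inclusion via Lemma~\ref{lem:global_subset}, negate, use the boundedness hypothesis to extract a convergent (sub)sequence, and apply Proposition~\ref{prop:global_L_c} to reach a contradiction. Your explicit subsequence extraction is just a more careful spelling-out of the paper's ``without loss of generality'' step, and is a valid input to Proposition~\ref{prop:global_L_c}.
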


\begin{proof}
  From Lemma~\ref{lem:global_subset}, we only need to show the
  existence of $\hat{c} > 0$ such that $G_{\mbox{\tiny{NLP}}}(c)
  \subseteq \tilde{G}_{\mbox{\tiny{NSDP}}}$ for all $c \ge
  \hat{c}$. Assume that this statement is false. Then, there exist
  sequences $\{ (x^k,\Lambda_k) \} \subset \Re^n \times \S^m$ and $\{
  c_k \} \subset \Re_{++}$ with $c_k \to \infty$, $c_k \ge \bar{c}$,
  and $(x^k,\Lambda_k) \in G_{\mbox{\tiny{NLP}}}(c_k)$, but such that
  $(x^k,\Lambda_k) \notin \tilde{G}_{\mbox{\tiny{NSDP}}}$. Since
  $\bigcup_{c \ge \bar{c}} G_{\mbox{\tiny{NLP}}}(c)$ is bounded, we
  can assume, without loss of generality, that $x^k \to \hat{x}$ and
  $\Lambda_k \to \hat{\Lambda}$ for some $(\hat{x},\hat{\Lambda}) \in
  \Re^n \times \S^m$. Thus, Proposition~\ref{prop:global_L_c} shows
  that there exists $\hat{k} > 0$ such that $(x^k,\Lambda_k) \in
  \tilde{G}_{\mbox{\tiny{NSDP}}}$ for all $k \ge \hat{k}$, which is a
  contradiction.
\end{proof}


\section{Preliminary numerical experiments}
\label{sec:experiments}

This work is focused on the theoretical aspects of exact augmented
Lagrangian functions, however, we take a look at the numerical
prospects of our approach by examining two simple problems in the next
two subsections. We will now explain briefly our proposal. Given a
problem~\eqref{eq:nsdp}, the idea is to use some unconstrained
optimization method to solve~\eqref{eq:nlp}, i.e., minimization
of~$L_c$ given in~\eqref{eq:exactaugL}. First, an initial point~$x^0
\in \Re^n$, together with the initial penalty parameter $c_0 > 0$ are
selected. Then, we choose the initial Lagrange multiplier~$\Lambda_0
\in \S^m$. One possibility is to use the multiplier estimate, i.e., to
set~$\Lambda_0$ as $\Lambda(x^0)$, where $\Lambda(\cdot)$ is defined
in~\eqref{eq:Lambda}.

Then, we run the unconstrained optimization method of our choice.
However, since the penalty values for which~$L_c$ becomes exact is not
known beforehand, we attempt to adjust the penalty parameter between
iterations as follows. Let $\tau \in (0,1)$ and $\rho > 1$. Denote
by~$x^k$ and~$\Lambda_k$, $c_k$ the values of~$x$, $\Lambda$ and~$c$
at the $k$th iteration, respectively. Recalling the function
$Y_{c}(\cdot,\cdot)$ defined in \eqref{eq:Y_c}, if at the $k$th
iteration we have
$$
\norm{Y_{c_k}(x^k,\Lambda_k)}_F = \left\| \PS \left(
  \frac{\Lambda_k}{c_k} - G(x^k) \right) - \frac{\Lambda_k}{c_k}
\right\|_F > \tau \norm{Y_{c_{k-1}}(x^{k-1},\Lambda_{k-1})}_F,
$$
then we let $c_k$ be $\rho \, c_{k-1}$. Otherwise, we let $c_k$ be
$c_{k-1}$. This is an idea that appears in many augmented Lagrangian
methods, for example in~\cite{BM14}. The motivation is that
$\norm{Y_{c_k}(x^k,\Lambda_k)}_F$ is zero if and only if $G(x^k) \in
\S^m_+$, $\Lambda_k \in \S^m_+$ and $G(x^k) \circ \Lambda _k =
0$. Therefore, $\norm{Y_{c_k}(x^k,\Lambda_k)}_F$ is a measure of the
degree to which complementarity and feasibility are satisfied, taking
into account the current penalty parameter.  In summary, whenever
there is not enough progress, we increase the penalty. In order to
avoid the problem becoming too ill-conditioned, we never increase the
penalty past some fixed value $c_{\max}$. We also point out that the
update of the penalty parameter can be done by using the so-called
test function, which is originally defined in~\cite{GP79}. However, as
it can be seen in the paper about exact penalty
functions~\cite{AFS13}, the above approach using $Y_c(\cdot,\cdot)$ is
more efficient, which justifies its use here.

In our implementation, $\rho$, $\tau$ and $c_{\max}$ is set to $1.1$,
$0.9$ and~$1000$, respectively. The maximum number of iterations
is~$5000$. The values of $\zeta_1$ and $\zeta_2$, which control the
behavior of $N(x)$ in \eqref{eq:N(x)}, are set to $1$ and $10^{-4}$,
respectively. The initial penalty parameter $c_0$ is computed using
the formula:
\begin{equation*}
  c_0 := \max \left\{ c_{\min}, \min \left\{ c_{\max}, 
      \frac{10 \max (1, |f(x^0)|)}{\max(1,0.5\norm{G(x^0)}_F^2)}
    \right\} \right\},
\end{equation*}
with $c_{\min} = 0.1$, which is similar to the one used
in~\cite{BM14}. The unconstrained method of our choice is the BFGS
method using the Armijo's condition for the line search.  We stop the
algorithm when the KKT conditions are satisfied within $10^{-5}$ or
when the norm of the gradient of $L_c$ is less than $10^{-5}$.

We implemented the algorithm in Python and ran all the experiments on
a Intel Core i7-6700 machine with 8 cores and 16GB of memory. As we
already pointed out after \eqref{eq:grad_L_Lc}, an important
implementational aspect is that we never need to explicitly evaluate
the function $\Lambda (\cdot)$, except in the optional way of
computing the initial Lagrange multiplier.

\subsection{Noll's example}

As an initial example, we took a look at this simple instance by Noll~\cite{N07}:
\begin{equation}
  \label{eq:noll}
  \tag{Noll}
  \begin{array}{ll}
    \underset{x \in \Re^2}{\mbox{minimize}} & 0.5 (-x_1^2-x_2^2) \\ 
    \mbox{subject to} &
    \left(
      \begin{array}{c@{\:\:\:}c@{\:\:\:}c}
        1 & x_1-1 & 0\\
        x_1-1 & 1 & x_2 \\
        0 & x_2 & 1
      \end{array} 
    \right)
    \in \S_{+}^3.
  \end{array}
\end{equation}
The problem \eqref{eq:noll} is already in the format 
\eqref{eq:nsdp}, which can be seen by letting $G$ be the function defined by
$$
G(x_1,x_2) := 
\left(
  \begin{array}{c@{\:\:\:}c@{\:\:\:}c}
    1 & x_1-1 & 0\\
    x_1-1 & 1 & x_2 \\
    0 & x_2 & 1
  \end{array} 
\right).
$$
In order to compute $L_c$ and its gradient (see \eqref{eq:exactaugL},
\eqref{eq:NLambda}, \eqref{eq:grad_x_Lc} and \eqref{eq:grad_L_Lc}), we
need the partial derivative matrices of~$G$ and the adjoint of the
gradient of~$G$, which are given below:
\begin{displaymath}
  \frac{\partial G(x)}{\partial x_{1}}
  = \left(
    \begin{array}{c@{\:\:}c@{\:\:}c}
      0 & 1 & 0\\
      1 & 0 & 0 \\
      0 & 0 & 0
    \end{array}
  \right), \quad
  \frac{\partial G(x)}{\partial x_{2}}
  = \left(
    \begin{array}{c@{\:\:}c@{\:\:}c}
      0 & 0 & 0\\
      0 & 0 & 1 \\
      0 & 1 & 0
    \end{array}
  \right), \quad
  \grad G(x)^*V = 2(V_{12},V_{23})^\T, 
\end{displaymath}
where $V \in \S^3$ is an arbitrary matrix with $(i,j)$ entry denoted
by~$V_{ij}$.

The optimal value of \eqref{eq:noll} is $-2$ and it is achieved at
$(2,0)$.  Starting at $(1,0)$, our method found a solution satisfying
the optimality criteria in~14 iterations and $0.01$ seconds. The
initial and final penalty parameters were $6.66$ and $10.74$,
respectively. The objective function, the constraint function and
their gradients were evaluated $41$ times each.

\subsection{The closest correlation matrix problem} 

Let $H$ be a $m \times m$ symmetric matrix. 
The goal is to find a correlation matrix~$X$ that is as close as possible to $H$. 
In other words, we seek a solution to the following problem:
\begin{equation}
\label{eq:cor1}
\tag{Cor}
\begin{array}{ll}
\underset{X}{\mbox{minimize}} & \inner{X - H}{X-H} \\ 
\mbox{subject to} & X_{ii} = 1 \quad \mbox{for all } i,\\
& X  \in \S_{+}^m.
\end{array}
\end{equation}

\noindent There are many variants of \eqref{eq:cor1} where weighting
factors are added, constraints on the eigenvalues are considered, and
so on. With that, this family of problems has found of wealth of
applications in statistics and finance~\cite{Hig02}.

In this example, it is possible to show that the nondegeneracy
condition is satisfied at every feasible point (Assumption
\ref{hip:nondegeneracy}), which guarantee the theoretical properties
of the exact augmented Lagrangian function~$L_c$. This was proved by
Qi and Sun in \cite{QS11}, but since there are some differences in
notation, we will first take a look at this issue.  Qi and Sun proved
the following result.

\begin{proposition}\label{prop:QS}
	Let $Y \in \S^m_+$ be such that $Y_{ii} = 1$, for all $i$. 
	Then, 
	$$
	\diag(\lin \mathcal{T}_{\S^m_+}(Y)) = \Re^m,
	$$	
	where $\diag \colon \S^m \to \Re^m$ is the linear map that maps a 
	symmetric matriz $Z$ to its diagonal $(Z_{11},Z_{22},\ldots, Z_{mm})$.
\end{proposition}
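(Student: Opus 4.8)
The plan is to reduce the claim to a concrete fact about the lineality space of the tangent cone to $\S^m_+$, and to bring in the unit-diagonal hypothesis $Y_{ii} = 1$ only at the very end. First I would recall the explicit description of the tangent cone. Writing $r := \mathrm{rank}(Y)$, let the columns of $Q_2 \in \Re^{m \times (m-r)}$ form an orthonormal basis of $\ker Y$ (equivalently, diagonalize $Y = [Q_1\ Q_2]\,D\,[Q_1\ Q_2]^\T$ with $Q_1$ spanning the range of $Y$). It is a standard fact that
\[
  \mathcal{T}_{\S^m_+}(Y) = \{ Z \in \S^m : Q_2^\T Z Q_2 \succeq 0 \},
\]
so its lineality space, being the intersection of this cone with its negative, is
\[
  \lin \mathcal{T}_{\S^m_+}(Y) = \{ Z \in \S^m : Q_2^\T Z Q_2 = 0 \} =: \mathcal{L}.
\]

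Next I would pass to the orthogonal complement. Since $\diag$ is linear and the target space $\Re^m$ is finite-dimensional, proving $\diag(\mathcal{L}) = \Re^m$ is equivalent to proving that the adjoint is injective: the only $d \in \Re^m$ with $\inner{d}{\diag(Z)} = 0$ for every $Z \in \mathcal{L}$ is $d = 0$. Denoting by $\mathrm{Diag}(d)$ the diagonal matrix with entries $d$, this condition is exactly $\mathrm{Diag}(d) \in \mathcal{L}^\perp$. Because $\mathcal{L}$ is the kernel of the linear map $Z \mapsto Q_2^\T Z Q_2$, whose adjoint is $W \mapsto Q_2 W Q_2^\T$, one obtains $\mathcal{L}^\perp = \{ Q_2 W Q_2^\T : W \in \S^{m-r} \}$.

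The final step is where the hypothesis enters. Suppose $\mathrm{Diag}(d) = Q_2 W Q_2^\T$ for some $W$. Since the columns of $Q_2$ span $\ker Y$ and the range of $Y$ equals $(\ker Y)^\perp$, every $w$ in the range of $Y$ satisfies $Q_2^\T w = 0$, and therefore $\mathrm{Diag}(d)\,w = Q_2 W Q_2^\T w = 0$; that is, $\mathrm{Diag}(d)$ annihilates the range of $Y$. Applying this to the $i$-th column $Y e_i$ of $Y$, which lies in the range of $Y$, and reading off the $i$-th coordinate gives $0 = e_i^\T \mathrm{Diag}(d) Y e_i = d_i Y_{ii} = d_i$, where the last equality uses $Y_{ii} = 1$. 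Hence $d = 0$, which yields the desired surjectivity.

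I expect the main obstacle to be not any deep difficulty but the careful bookkeeping around the two standard facts invoked as black boxes: the explicit tangent-cone formula for $\S^m_+$ and the identification of $\mathcal{L}^\perp$ with the image of $W \mapsto Q_2 W Q_2^\T$. Everything else is routine linear algebra, and the unit-diagonal hypothesis is used at a single, transparent point. An alternative and fully constructive route would instead exhibit, for each $i$, a specific $Z \in \mathcal{L}$ with $\diag(Z) = e_i$; this also works but is more computational, whereas the duality argument above makes the role of $Y_{ii} = 1$ immediate.
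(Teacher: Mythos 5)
Your argument is correct. Each step checks out: the formula $\mathcal{T}_{\S^m_+}(Y) = \{ Z \in \S^m : Q_2^\T Z Q_2 \succeq 0\}$ is the standard description of the tangent cone at $Y$, its lineality space is indeed $\mathcal{L} = \{ Z : Q_2^\T Z Q_2 = 0\}$, the identification $\mathcal{L}^\perp = \{ Q_2 W Q_2^\T : W \in \S^{m-r}\}$ follows from $\mathcal{L} = \ker \Phi$ with $\Phi(Z) = Q_2^\T Z Q_2$ and $\Phi^*(W) = Q_2 W Q_2^\T$, and the closing computation $0 = e_i^\T \mathrm{Diag}(d) Y e_i = d_i Y_{ii} = d_i$ uses the unit-diagonal hypothesis exactly where one expects. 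The comparison with the paper is somewhat one-sided, however: the paper does not prove this proposition at all, but simply defers to Proposition 2.1 and Equation (2.2) of Qi and Sun's paper \cite{QS11}, where the result is established as part of their constraint-nondegeneracy analysis for the nearest correlation matrix problem. So what you have produced is a self-contained replacement for an external citation rather than an alternative to an in-paper argument. Your duality route (surjectivity of $\diag$ restricted to $\mathcal{L}$ via injectivity on $\mathcal{L}^\perp$) is arguably cleaner than the constructive alternative you mention at the end, since it isolates the single place where $Y_{ii}=1$ matters; the constructive route, exhibiting for each $i$ an explicit $Z \in \mathcal{L}$ with $\diag(Z) = e_i$, is closer in spirit to how such nondegeneracy statements are usually verified in the literature but requires more bookkeeping with the eigenvectors of $Y$. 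Either way, no gap.
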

\begin{proof}
	See Proposition {2.1} and Equation (2.2) in \cite{QS11}.\qed
\end{proof}

We now write \eqref{eq:cor1} in a format similar to 
\eqref{eq:nsdp}. For that, denote by $A^{ij}$ the $m\times m$ 
the matrix that has $1$ in the $(i,j)$ and $(j,i)$ entries and $0$ elsewhere. 
Then, by discarding constant terms in the objective function,  \eqref{eq:cor1} can be reformulated  equivalently as follows.
\begin{align}
\underset{x \in \Re^{m(m-1)/2}}{\mbox{minimize}} & \quad \sum _{1 \leq i < j \leq m} (H_{ij}-x_{ij})^2 \tag{Cor2} \label{eq:cor_std}\\ 
\mbox{subject to} &\quad  I + \sum _{1 \leq i < j\leq m} A^{ij}x_{ij} \in \S^m_+. \notag
\end{align}
Here, $x$ can be thought as an upper triangular matrix without the diagonal. That is why the dimension of $x$ is $m(m-1)/2$  and we index $x$ by using $x_{ij}$ for $1 \leq i < j \leq m$.

\begin{proposition}
  Problem~\eqref{eq:cor_std} satisfies Assumption \ref{hip:nondegeneracy}.
\end{proposition}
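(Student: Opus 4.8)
The plan is to reduce the statement to the Qi--Sun result recorded in Proposition~\ref{prop:QS}. First I would compute $\mathrm{Im}\,\grad G(x)$ for the constraint map $G(x) = I + \sum_{1\le i<j\le m} A^{ij} x_{ij}$ of~\eqref{eq:cor_std}. Since $G$ is affine, its derivative is the constant linear map $\grad G(x)\,v = \sum_{1\le i<j\le m} A^{ij} v_{ij}$, so its image is $\mathrm{span}\{A^{ij} : 1\le i<j\le m\}$. Each $A^{ij}$ has zero diagonal, and conversely every symmetric matrix with vanishing diagonal is a combination of these $A^{ij}$; hence $\mathrm{Im}\,\grad G(x) = \{Z \in \S^m : \diag(Z) = 0\} = \ker \diag$, independently of $x$.

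Next I would check that the hypothesis of Proposition~\ref{prop:QS} holds at every feasible point. Because $I$ contributes $1$ to each diagonal entry and the $A^{ij}$ with $i<j$ have zero diagonal, we have $G(x)_{ii} = 1$ for all $i$ and all $x$; so for any $x$ feasible for~\eqref{eq:cor_std} the matrix $Y := G(x)$ lies in $\S^m_+$ and satisfies $Y_{ii}=1$. Proposition~\ref{prop:QS} then gives $\diag\big(\lin \mathcal{T}_{\S^m_+}(Y)\big) = \Re^m$.

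The final step is a short linear-algebra decomposition. Given an arbitrary $Z \in \S^m$, the surjectivity of $\diag$ on $\lin \mathcal{T}_{\S^m_+}(Y)$ yields some $T \in \lin \mathcal{T}_{\S^m_+}(Y)$ with $\diag(T) = \diag(Z)$; then $K := Z - T$ has zero diagonal, so $K \in \ker \diag = \mathrm{Im}\,\grad G(x)$, and $Z = T + K$ exhibits the required splitting $\S^m = \lin \mathcal{T}_{\S^m_+}(G(x)) + \mathrm{Im}\,\grad G(x)$, which is exactly nondegeneracy (Assumption~\ref{hip:nondegeneracy}).

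I do not expect a serious obstacle here: the substance is entirely carried by Proposition~\ref{prop:QS}, and the only things to get right are the identification $\mathrm{Im}\,\grad G(x) = \ker \diag$ and the observation that feasible points of~\eqref{eq:cor_std} automatically have unit diagonal. If anything is delicate, it is making the notational translation between~\eqref{eq:cor1} (matrix variable $X$) and~\eqref{eq:cor_std} (vector variable $x$) clean, so that the $Y$ to which Proposition~\ref{prop:QS} is applied is genuinely $G(x)$.
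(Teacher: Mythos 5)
Your proof is correct and follows essentially the same route as the paper's: identify $\mathrm{Im}\,\grad G(x)$ with the zero-diagonal symmetric matrices, note every feasible $G(x)$ has unit diagonal, and invoke Proposition~\ref{prop:QS} to split an arbitrary $Z \in \S^m$. If anything, your version is slightly more careful than the paper's, which writes the decomposition as diagonal part plus off-diagonal part and asserts the diagonal part lies in the lineality space, whereas Proposition~\ref{prop:QS} only gives surjectivity of $\diag$ on that space; your choice of $T$ with $\diag(T)=\diag(Z)$ and $K=Z-T$ handles this cleanly.
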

\begin{proof}
	We must show that 
	\begin{displaymath}
	\S^m = \lin \mathcal{T}_{\S^m_+}(G(x)) + \mathrm{Im} \, \grad G(x),
	\end{displaymath}
	where $G: \Re^{m(m-1)/2} \to \S^m$ is the function such that 
	$$
	G(x) = I + \sum _{1\leq i < j \leq m} A^{ij}x_{ij}.
	$$ 	
	Now, let $Y \in \S^m$ be arbitrary. We can write 
	$Y$ as 
	$$
	Y = \sum _{i=1}^m A^{ii} Y_{ii} + \sum _{1 \leq i < j \leq m} A^{ij} Y_{ij}.
	$$
	From Proposition~\ref{prop:QS}, the first summation belongs to 
	$\lin \mathcal{T}_{\S^m_+}(Y)$. Then, noting that  $\mathrm{Im} \, \grad G(x) $ is the space spanned 
	by $\{A^{ij} \colon 1 \leq i < j \leq m \}$, we conclude 
	that the second summation belongs 
	to $\mathrm{Im} \, \grad G(x)$. 
	This shows that Assumption~\ref{hip:nondegeneracy} is satisfied. \qed
\end{proof}
We now write some useful formulae which can be 
used in conjunction with \eqref{eq:NLambda}, \eqref{eq:grad_x_Lc} and \eqref{eq:grad_L_Lc} to compute $L_c$ and its gradient.
Let $V$ be an arbitrary $m\times m$ symmetric matrix. Then,
\begin{displaymath}
\frac{\partial G(x)}{\partial x_{ij}} = A^{ij}, \qquad \grad G(x)^*V = 2v, 
\end{displaymath}
for $1 \leq i < j \leq m$, where~$v$ corresponds to the upper
triangular part of $V$ without the diagonal.

We now move on to the experiments. 
We generated $50$ symmetric matrices~$H$ such that the diagonal entries are all~$1$ and non-diagonal elements are uniform random numbers between $-1$ and $1$. This was repeated for $m = 5, 10, 15, 20$. We then ran our algorithm using as initial point the matrix having~$1$ in all its entries.
The results can be seen in Table \ref{table:ex1}.
All the values depicted in Table \ref{table:ex1} are averages among $50$ runs. The column ``Iterations'' correspond to average number of BFGS iterations.
At each run, we recorded the number of function 
evaluations for $f$, which is the same for $G,\grad f$ and $\grad G$. 
Then, the column ``Evaluations'' in Table \ref{table:ex1} is the average number of function evaluations.
Columns ``Initial $c$'' and ``Final $c$'' correspond to the average of the initial and final penalty parameters, respectively.
Finally, column ``Time~(s)'' is the average running time, in seconds.
\begin{table}[!htb]
	\centering
	\caption{Results for \eqref{eq:cor1}.}
	\label{table:ex1}
	\begin{tabular}{@{}rrrrrr}
		$m$ & Iterations & Evaluations  & Initial $c$ & Final $c$ & Time (s)  \\\midrule
		5	& 114.62		& 371.22			& 21.98 & 805.2   & 0.208\\
		10	& 520.96		& 1844.62			& 23.31 & 1000.0  & 1.923\\
		15	& 1191.62		& 4297.74			& 24.77 & 1000.0  & 10.170\\
		20	& 2101.02		& 7801.00			& 25.39 & 1000.0  & 42.490\\
	\end{tabular}
\end{table}

No failures were detected, that is, we obtained approximate KKT points within $10^{-5}$ for all the instances.
We also observed that, except for $m = 5$, the final penalty parameter climbed up to the maximum value.  
At first glance, this suggests that the penalty was not large enough. However, we were still able to solve the problems without increasing the maximum value.
In fact, we noted that, in some cases, the performance degraded when the maximum penalty value was increased.
At this moment, the method presented here is not competitive against the approach 
in \cite{QS11}, where we observed that an $20\times 20$ instance is typically solved in less than a second in our hardware. However, it should be emphasized that a second-order method is used in \cite{QS11}, where here we used 
BFGS. It would be interesting to apply and analyze a second-order method in combination with the exact augmented Lagrangian function $L_c$, but this investigation is beyond the scope of this paper.


\section{Final remarks}
\label{sec:conclusions}

We proposed a generalized augmented Lagrangian function
$\mathcal{A}_c$ for NSDP problems, giving conditions for it to be
exact. After that, we considered a particular function $L_c$, and we
proved that it is exact under the nondegeneracy condition and some
reasonable assumptions as in Theorem~\ref{the:global_final}. We
  also presented some preliminary numerical experiments using a
  quasi-Newton method with BFGS formula, showing the validity of
  the approach. One future work is to analyze more efficient
methods that can solve the unconstrained minimization of~$L_c$. From
Lemma~\ref{lem:estimate} and the formula of~$L_c$, given
in~\eqref{eq:exactaugL}, we observe that $L_c$ is an $\mbox{SC}^1$
function, i.e., it is continuously differentiable and its gradient is
semismooth. It means that the unconstrained problem can be solved with
second-order methods, as the semismooth Newton. However, the gradient
of $L_c$, given in~\eqref{eq:grad_x_Lc}, contains second-order terms
of problem functions~$f$ and~$G$, and thus, a second-order method
would have to deal with third-order derivatives. We believe that we
can use the idea proposed in~\cite{FSF12}, that avoids these
third-order terms, but still guaranteeing the global superlinear
convergence.

By using the generalized function~$\mathcal{A}_c$ as a tool, other
practical exact augmented Lagrangian functions can be studied for
NSDP, or other important conic optimization problems. In fact, as it
can be seen in~\cite{DPL96}, many other exact augmented Lagrangian
functions exist, but only for the classical nonlinear programming. For
example, recalling~\eqref{eq:A_c_functions}, we note that $L_c$ is
defined by choosing functions $\alpha_c$ and $\beta_c$ as
constants. By considering more sophisticated formulas, it is possible
to weaken the assumptions used here. This should be another matter of
investigation.


\section*{Acknowledgements}
  We would like to thank the anonymous referees for their suggestions
  which improved the original version of the paper. We are also
  thankful to Akiko Kobayashi for valuable discussions about exact
  augmented Lagrangian functions.


\bibliographystyle{plain}
\bibliography{journal-titles,references}

\begin{thebibliography}{10}

\bibitem{AS10}
T.~A. Andr{\' e} and P.~J.~S. Silva.
\newblock Exact penalties for variational inequalities with applications to
  nonlinear complementarity problems.
\newblock {\em Computational Optimization and Applications}, 47(3):401--429,
  2010.

\bibitem{AFS13}
R.~Andreani, E.~H. Fukuda, and P.~J.~S. Silva.
\newblock A {G}auss-{N}ewton approach for solving constrained optimization
  problems using differentiable exact penalties.
\newblock {\em Journal of Optimization Theory and Applications},
  156(2):417--449, 2013.

\bibitem{ANT03}
P.~Apkarian, D.~Noll, and H.~D. Tuan.
\newblock Fixed-order ${H}_\infty$ control design via a partially augmented
  {L}agrangian method.
\newblock {\em International Journal of Robust and Nonlinear Control},
  13(12):1137--1148, 2003.

\bibitem{BJKNZ00}
A.~Ben-Tal, F.~Jarre, M.~Ko{\v c}vara, A.~Nemirovski, and J.~Zowe.
\newblock Optimal design of trusses under a nonconvex global buckling
  constraint.
\newblock {\em Optimization and Engineering}, 1(2):189--213, 2000.

\bibitem{Ber09}
D.~S. Bernstein.
\newblock {\em Matrix Mathematics: Theory, Facts, and Formulas}.
\newblock Princeton University Press, 2nd edition, 2009.

\bibitem{Ber82a}
D.~P. Bertsekas.
\newblock {\em Constrained Optimization and {L}agrange Multipliers Methods}.
\newblock Academic Press, New York, 1982.

\bibitem{BM14}
E.~G. Birgin and J.~M. Mart\'inez.
\newblock {\em Practical augmented {L}agrangian methods for constrained
  optimization}.
\newblock Society for Industrial and Applied Mathematics, Philadelphia, PA,
  2014.

\bibitem{BS00}
J.~F. Bonnans and A.~Shapiro.
\newblock {\em Perturbation Analysis of Optimization Problems}.
\newblock Springer-Verlag, New York, 2000.

\bibitem{CR04}
R.~Correa and H.~Ram\'irez~C.
\newblock A global algorithm for nonlinear semidefinite programming.
\newblock {\em SIAM Journal on Optimization}, 15(1):303--318, 2004.

\bibitem{DPG79}
G.~Di~Pillo and L.~Grippo.
\newblock A new class of augmented {L}agrangians in nonlinear programming.
\newblock {\em SIAM Journal on Control and Optimization}, 17(5):618--628, 1979.

\bibitem{DPG82}
G.~Di~Pillo and L.~Grippo.
\newblock A new augmented {L}agrangian function for inequality constraints in
  nonlinear programming.
\newblock {\em Journal of Optimization Theory and Applications},
  36(4):495--519, 1982.

\bibitem{DPG89}
G.~Di~Pillo and L.~Grippo.
\newblock Exact penalty functions in constrained optimization.
\newblock {\em SIAM Journal on Control and Optimization}, 27(6):1333--1360,
  1989.

\bibitem{DPLLP03}
G.~Di~Pillo, G.~Liuzzi, S.~Lucidi, and L.~Palagi.
\newblock An exact augmented {L}agrangian function for nonlinear programming
  with two-sided constraints.
\newblock {\em Computational Optimization and Applications}, 25:57--83, 2003.

\bibitem{DPL96}
G.~Di~Pillo and S.~Lucidi.
\newblock On exact augmented {L}agrangian functions in nonlinear programming.
\newblock In G.~Di~Pillo and F.~Giannessi, editors, {\em Nonlinear Optimization
  and Applications}, pages 85--100. Springer, Boston, MA, 1996.

\bibitem{DPL01}
G.~Di~Pillo and S.~Lucidi.
\newblock An augmented {L}agrangian function with improved exactness
  properties.
\newblock {\em SIAM Journal on Optimization}, 12(2):376--406, 2001.

\bibitem{FAN01}
B.~Fares, P.~Apkarian, and D.~Noll.
\newblock An augmented {L}agrangian method for a class of {LMI}-constrained
  problems in robust control theory.
\newblock {\em International Journal of Control}, 74(4):348--360, 2001.

\bibitem{FKS13}
J.~Fiala, M.~Ko{\v c}vara, and M.~Stingl.
\newblock {PENLAB}: A {MATLAB} solver for nonlinear semidefinite optimization.
\newblock {\em ArXiv e-prints}, November 2013.

\bibitem{For00}
A.~Forsgren.
\newblock Optimality conditions for nonconvex semidefinite programming.
\newblock {\em Mathematical Programming}, 88(1):105--128, 2000.

\bibitem{FSF12}
E.~H. Fukuda, Silva P.~J. S., and M.~Fukushima.
\newblock Differentiable exact penalty functions for nonlinear second-order
  cone programs.
\newblock {\em SIAM Journal on Optimization}, 22(4):1607--1633, 2012.

\bibitem{GP79}
T.~Glad and E.~Polak.
\newblock A multiplier method with automatic limitation of penalty growth.
\newblock {\em Mathematical Programming}, 17(2):140--155, 1979.

\bibitem{Han14}
L.~Han.
\newblock The differentiable exact penalty function for nonlinear semidefinite
  programming.
\newblock {\em Pacific Journal of Optimization}, 10(2):285--303, 2014.

\bibitem{Hig02}
N.~J. Higham.
\newblock Computing the nearest correlation matrix -- a problem from finance.
\newblock 22:329--343, 2002.

\bibitem{Jar12}
F.~Jarre.
\newblock Elementary optimality conditions for nonlinear {SDP}s.
\newblock In {\em Handbook on Semidefinite, Conic and Polynomial Optimization},
  volume 166 of {\em International Series in Operations Research \& Management
  Science}, pages 455--470. Springer, 2012.

\bibitem{KT06}
Y.~Kanno and I.~Takewaki.
\newblock Sequential semidefinite program for maximum robustness design of
  structures under load uncertainty.
\newblock {\em Journal of Optimization Theory and Applications},
  130(2):265--287, 2006.

\bibitem{KKW03}
H.~Konno, N.~Kawadai, and D.~Wu.
\newblock Estimation of failure probability using semi-definite logit model.
\newblock {\em Computational Management Science}, 1(1):59--73, 2003.

\bibitem{KS04}
M.~Ko\v{c}vara and M.~Stingl.
\newblock Solving nonconvex {SDP} problems of structural optimization with
  stability control.
\newblock {\em Optimization Methods \& Software}, 19(5):595--609, 2004.

\bibitem{Lew96}
A.~S. Lewis.
\newblock Convex analysis on the {H}ermitian matrices.
\newblock {\em SIAM Journal on Optimization}, 6(1):164--177, 1996.

\bibitem{LFF16}
B.~F. Louren\c{c}o, E.~H. Fukuda, and M.~Fukushima.
\newblock Optimality conditions for nonlinear semidefinite programming via
  squared slack variables.
\newblock {\em Mathematical Programming}, 168(1):177--200, Mar 2018.

\bibitem{Luc88}
S.~Lucidi.
\newblock New results on a class of exact augmented {L}agrangians.
\newblock {\em Journal of Optimization Theory and Applications}, 58(2), 1988.

\bibitem{N07}
Dominikus Noll.
\newblock Local convergence of an augmented {L}agrangian method for matrix
  inequality constrained programming.
\newblock {\em Optimization Methods and Software}, 22(5):777--802, 2007.

\bibitem{QS11}
Houduo Qi and Defeng Sun.
\newblock An augmented {L}agrangian dual approach for the {H}-weighted nearest
  correlation matrix problem.
\newblock {\em IMA Journal of Numerical Analysis}, 31(2):491--511, 2011.

\bibitem{Sha97}
A.~Shapiro.
\newblock First and second order analysis of nonlinear semidefinite programs.
\newblock {\em Mathematical Programming}, 77(1):301--320, 1997.

\bibitem{SS04}
A.~Shapiro and J.~Sun.
\newblock Some properties of the augmented {L}agrangian in cone constrained
  optimization.
\newblock {\em Mathematics of Operations Research}, 29(3):479--491, 2004.

\bibitem{Tse98}
P.~Tseng.
\newblock Merit functions for semi-definite complementarity problems.
\newblock {\em Mathematical Programming}, 83:159--185, 1998.

\bibitem{YY15}
H.~Yamashita and H.~Yabe.
\newblock A survey of numerical methods for nonlinear semidefinite programming.
\newblock {\em Journal of the Operations Research Society of Japan},
  58(1):24--60, 2015.

\bibitem{YYH12}
H.~Yamashita, H.~Yabe, and K.~Harada.
\newblock A primal-dual interior point method for nonlinear semidefinite
  programming.
\newblock {\em Mathematical Programming}, 135(1-2):89--121, 2012.

\end{thebibliography}

\end{document}